\definecolor{blueumi}{rgb}{0.00,0.48,0.62}
\newtheorem{definition}{Definition}[section]
\newtheorem{theo}{Theorem}[section]
\newtheorem{prop}[theo]{Proposition}
\newtheorem{Rq}{Remark}
\newtheorem{cor}{Corollary}
\begin{document}
\pagestyle{plain}

\cfoot[C]{\thepage}


\vspace*{-1cm}





\title{On Topological Complexity of Gorenstein spaces}
\author[1]{Smail BENZAKI}
\author[2]{Youssef RAMI}

{\let\thefootnote\relax\footnote{{\it Address}: Moulay Ismaïl University, Department of Mathematics  B. P. 11 201 Zitoune,  Meknès, Morocco}\let\thefootnote\relax\footnote{{\it Emails}: smail.benzaki@edu.umi.ac.ma  and y.rami@umi.ac.ma}}

 \keywords{Gorenstein spaces, Higher Topological complexity, Ext Eilenberg-Moore functor}
 \subjclass{Primary 55M30; Secondary   55P62}

 \keywords{Higher topological complexity, Eilenberg-Moore functor, Sullivan
 algebra, Gorenstein spaces}
 \subjclass{Primary 55P62; Secondary 55M30 }

\renewcommand{\abstractname}{Abstract}
\begin{abstract}
In this paper, using Sullivan's approach to rational homotopy theory of simply-connected finite type CW complexes, we endow the  $\mathbb{Q}$-vector space $\mathcal{E}xt_{C^{\ast}(X;\mathbb{Q})}(\mathbb{Q},C^{\ast}(X;\mathbb{Q}))$ with a graded commutative algebra structure. This leads us to introduce the $\mathcal{E}xt$-version of higher  (resp. module, homology) topological complexity of $X_0$, the rationalization of $X$ (resp. of $X$ over $\mathbb{Q}$). We then make comparisons between these invariants and their respective ordinary  ones for Gorenstein spaces.
We also highlight, in this context, the benefit of Adams-Hilton models over a field of odd characteristics especially through two cases, the first one when the space is a  $2$-cell CW-complex and the second one when it is a suspension.
\end{abstract}

\maketitle

\section{Introduction}
Throughout this work  we assume that $X$ is a  $1$-connected CW-complex of finite type and $\mathbb{K}$  a field of  arbitrary characteristic. In his famous paper, M. Farber introduced the concept of topological complexity of a space $X$, $TC(X)$, which is  indeed seen as the {\it sectional category}, $secat(\Delta_X)$, of the diagonal map $\Delta_X : X\rightarrow X\times X$. Recall that if $f: X\rightarrow Y$ is a continuous map,  $secat(f)$ is the smallest integer $m$ for which there are $m+1$ local homotopy sections $s_i : U_i\rightarrow Y$ for $f$ whose sources form  an open covering of $X$. $TC(X)$ is actually  used to assess the complexity of motion planning algorithms of a mechanical system along its configuration space $X$. Later, Y. Rudyak in \cite{Ru} generalizes Farber's concept and introduces that of  higher topological complexity, $TC_n(X)$ ($n\geq 2$), which turns out to be the sectional category  $secat(\Delta^n_X)$ of the $n$-diagonal $\Delta^n_X : X\rightarrow X^n$. Determining $TC_n(X)$ is as difficult as it is exactly for the Lusternik-Scnirelmann category $cat_{LS}(X) = secta(*\hookrightarrow X)$ introduced in [LS]. Spectacular advances using rational homotopy theory methods  have however made it possible to establish better approximation of this latter, namely $cat_{LS}(X_0)\leq cat_{LS}(X)$ with $X_0$ denoting the rationalization $X_0$ of $X$. In that sense, J. Carrasquel used in \cite{JCar-V} the characterization \`a la F\'elix-Halperin to give an explicit definition  of  the higher topological complexity $TC_n(X_0)$ which in turn lowers $TC_n(X)$ (cf. Definition 4.1 below). This is achieved thanks to  the {\it Sullivan model} (called also {\it Sullivan algebra}) $(\Lambda V,d)$ associated to $X$ (cf. \S 2 for more details). He also introduced higher (rational) {\it   homology  (resp. module) topological complexity} $HTC_n(X)$, (resp. $MTC_n(X)$) and showed that they interpolate $zcl_n(X) := nil ker H^*(\Delta^n_X)$ and $TC_n(X_0)$.

 In this paper we are interested in the study of the invariant $HTC_n(X)$ when $X$ satisfies the {\it Gorenstein property} which we introduce right after.
Recall first that to every  local commutative ring $R$  with residue field  $\mathbb{K}$ it is associated the invariant $Ext_R(\mathbb{K}, R)$ and it is said  a {\it Gorenstein ring} if $\dim Ext_R(\mathbb{K}, R)=1$.

Motivated by the deep study of this invariant in the context of local algebra, the authors of \cite{FHT2} introduced the notion of a {\it Gorenstein space} and thereby obtain new characterizations of Poincaré duality complexes.
Recall from \cite{Sp} that  a finite $n$-dimensional subcomplex $X$  of $\mathbb{R}^{n+k}$ is a Poincar\'e complex if and only if its {\it Spivak fiber} $F_X$ (see \S 4) is an homotopy sphere.
They give, in  the particular case where $X$  has finite Lusternik-Schnirelmann category  $cat_{LS}(X)$  and $\mathbb{K}=\mathbb{Z}_p$ ($p$ prime or zero), a {\it local analogue of the Spivak characterization} in terms of the Eilenberg-Moore Ext functor denoted here by $\mathcal{E}xt$ \cite[Theorem 3.1]{FHT2}:
\begin{equation}\label{p-local}
   \begin{array}{lcl}
   H^*(X, \mathbb{Z}_p)\; \hbox{\it is a Poincar\'e duality algebra} & \; \Leftrightarrow & (F_X)_{(p)} \simeq  (\mathbb{S}^k)_{(p)}\\
   & \Leftrightarrow & \dim \mathcal{E}xt_{C^*(X;\mathbb{Z}_p)}(\mathbb{Z}_p, C^*(X;\mathbb{Z}_p)) =1
    \end{array}
\end{equation}
   (cf. \S 2 for the definition of Poincar\'e duality algebras and the functor $\mathcal{E}xt$). 
Recall that,
 $C^*(X;\mathbb{Z}_p)$ is endowed with the left module structure over itself.
The equivalences in (\ref{p-local}) make sense to the following definition \cite{FHT2}.
\begin{definition}\label{Gorenstein}
$X$ is a Gorenstein space over $\mathbb{K}$  if $\dim \mathcal{E}xt_{C^*(X;\mathbb{K})}(\mathbb{K}, C^*(X;\mathbb{K})) =1$.
\end{definition}
For instance, if $X$ is finite dimensional then \cite[Proposition 4.5, Theorem 3.1]{FHT2}:
$$
   \begin{array}{lcl}
   X \; \hbox{is Gorenstein over} \; \mathbb{K} &  \Leftrightarrow & H^*(X, \mathbb{K})\; \hbox{ is a Gorenstein  algebra}\\
   & \Leftrightarrow & H^*(X, \mathbb{K}) \; \hbox{is a Poincaré duality algebra}.
    \end{array}
$$
 E.g. every closed manifold is Gorenstein over any field $\mathbb{K}$.
 In the rational case, if $\dim \pi_*(X)\otimes \mathbb{Q}$ is finite dimensional then  $X$ is Gorenstein over $\mathbb{Q}$ \cite[Proposition 3.4]{FHT2}. E.g. every rationally elliptic space  $X$ (i.e.   both if its rational homology and rational homotopy are finite dimensional) is Gorenstein over $\mathbb{Q}$.

The authors of \cite{FHT2} also introduced a morphism of $\mathbb{K}$-graded vector spaces:
$$ev_{C^*(X, \mathbb{K})}: \mathcal{E}xt_{C^*(X, \mathbb{K})}(\mathbb{K},C^*(X, \mathbb{K})\rightarrow H^*(X, \mathbb{K})$$
(cf. \S 2)   called the {\it evaluation map} of $X$ over $\mathbb{K}$. This is a homotopy invariant which  allows to better (algebraically) characterize  Poincar\'e duality spaces \cite[Corollary 2]{Ga}:
$$
X \;  \hbox{is a Poincaré duality space}   \Leftrightarrow \\
\left \{ \begin{array}{l}
(i)\; X \; \hbox{is a Gorenstein space},\\
(ii)\; ev_{C^*(X, \mathbb{K})}\not =0\\
(iii)\;  H^*(X, \mathbb{Q}) \; \hbox{is a Gorenstein algebra}
\end{array}
\right.
$$
Also, using the evaluation map we know, for instance, that every space  $Y= X\cup e^n$ with $e^n$ representing a non-zero class in $H_n(X, \mathbb{Q})$ is Gorenstein over $\mathbb{Q}$ \cite[Corollary 2.3 and Proposition 3.4]{FHT2}.

In fact, fewer space types for which the  invariants $HTC_n(X)$, $mTC_n(X)$ and $TC_n(X_0)$ are determined. However,  in \cite{HRV}   a new invariant $L(X_0)$ is introduced  and it is shown that  $cat_{LS}(X_0)+L(X_0)\leq TC_n(X_0)$ for any pure  elliptic coformal space. It is also established that $TC_n(X_0)=\dim \pi_*(X)\otimes \mathbb {Q}$ for certain particular families of such spaces.

In this work, inspired by Carrasquel's characterizations we will introduce $Ext$-versions of the aforementioned topological complexities.  We start by endowing the  graded $\mathbb{Q}$-vector space
$$\mathrm{A}=:Hom_{(\Lambda V, d)}(\Lambda V\otimes \Lambda(sV),(\Lambda V, d))$$ (cf. \S 2 for more details) with a homotopy multiplicative structure denoted in all that follows
\begin{equation}
\mu_{\mathrm{A}}: \mathrm{A}\otimes \mathrm{A}\rightarrow \mathrm{A}.
\end{equation}
The cohomology of $\mathrm{A}$ is neither than $\mathcal{A} =\mathcal{E}xt_{(\Lambda V, d)}(\mathbb{Q},(\Lambda V, d))$ and  our first main result reads:
\begin{theo}\label{Th1}
The graded vector space $\mathcal{A}$ endowed with the product $\mu_{{\mathcal{A}}}:=H^*(\mu_{\mathrm{A}})$ is a graded commutative algebra with unit. Moreover, the evaluation map $$ev_{(\Lambda V,d)}:\mathcal{E}xt_{(\Lambda V,d)}(\mathbb{Q},(\Lambda V,d))\rightarrow H(\Lambda V,d)$$ is a morphism of graded algebras.
\end{theo}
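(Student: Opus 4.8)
Write $R=(\Lambda V,d)$ and let $\epsilon\colon P:=\Lambda V\otimes\Lambda(sV)\xrightarrow{\simeq}\mathbb{Q}$ be the acyclic closure, an $R$-semifree resolution of $\mathbb{Q}$, so that $\mathrm{A}=\mathrm{Hom}_R(P,R)$. I will treat $\mu_{\mathrm{A}}$ as the convolution product $\mu_{\mathrm{A}}(f,g)=\mu_R\circ(f\otimes g)\circ\Delta$ built in \S2 from the commutative multiplication $\mu_R\colon R\otimes_R R=R$ and a comultiplication $\Delta\colon P\to P\otimes_R P$. The point I would isolate first is that $\Delta$ cannot be the naive shuffle coproduct of the free algebra $\Lambda(sV)$: that map fails to commute with the Koszul--Tate differential $d(sv)=v+\cdots$, which is precisely why $\mathrm{A}$ is only \emph{homotopy} multiplicative. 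Instead $\Delta$ is the $R$-linear chain map lifting, through the semifree filtration of $P$, the canonical comultiplication $\iota\colon\mathbb{Q}\to\mathbb{Q}\otimes_R^{\mathbb{L}}\mathbb{Q}$ induced by the unit $R\to\mathbb{Q}$ (a coassociative, cocommutative, counital comultiplication, its counit being the multiplication of $\mathbb{Q}$). Because the source $P$ is semifree, such a lift exists and is unique up to $R$-linear homotopy; in particular $\mu_{\mathrm{A}}$ is a cochain map and $\mu_{\mathcal{A}}=H^*(\mu_{\mathrm{A}})$ is defined on $\mathcal{A}=H(\mathrm{A})$ independently of the chosen $\Delta$.

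The algebra axioms on $\mathcal{A}$ I would then obtain by transporting the coherences of $\iota$ to the chain level: any two $R$-linear chain maps out of the semifree $P$ that represent the same morphism in $D(R)$ are chain homotopic. Coassociativity of $\iota$ gives $(\Delta\otimes\mathrm{id})\Delta\simeq(\mathrm{id}\otimes\Delta)\Delta$, hence associativity of $\mu_{\mathcal{A}}$; cocommutativity gives $\tau\circ\Delta\simeq\Delta$ for the Koszul twist $\tau$, which with the graded commutativity of $\mu_R$ yields graded commutativity of $\mu_{\mathcal{A}}$; and the counit of $\iota$ produces the unit. For the unit I would exhibit a degree-zero cocycle $u_0\colon P\to R$ refining the $R$-linear projection $1\mapsto 1_R$, $\Lambda^{\ge 1}(sV)\mapsto 0$ (which is itself not a cocycle, by the same differential obstruction) to a genuine cocycle, and check that $[u_0]$ is a two-sided unit of $\mathcal{A}$. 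The homotopies in all three axioms are forced to exist by the derived coherence of $\iota$ together with the semifreeness of $P$; the labor is organizing them with the correct Koszul signs.

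For the evaluation map I would use its cochain description $ev_{(\Lambda V,d)}([f])=[f(1)]$ with $1=1\otimes1\in P$, which is a cochain map $\mathrm{A}\to R$ since $d_P(1)=0$. The key simplification is a double normalization: because $1\otimes1\in P\otimes_R P$ is a cocycle representing the unit of $H(P\otimes_R P)$, i.e.\ $\iota(1)$, I may choose the lift with $\Delta(1)=1\otimes1$; and I may choose $u_0$ with $u_0(1)=1_R$, since the cocycle correction only alters values on $\Lambda^{\ge1}(sV)$. With these choices one computes, already on cochains, $ev_{(\Lambda V,d)}(\mu_{\mathrm{A}}(f,g))=\mu_R\bigl((f\otimes g)(1\otimes1)\bigr)=f(1)\cdot g(1)$ and $ev_{(\Lambda V,d)}(u_0)=1_R$. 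Since $ev_{(\Lambda V,d)}$ factors through cohomology and is therefore insensitive to the (homotopic) choices, this shows it is a morphism of graded algebras preserving units.

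The main obstacle is the content hidden in \S2 and reused here: producing $\Delta$ (and $u_0$) as honest chain maps past the failure of the naive coproduct and projection, and verifying the associativity, commutativity and unit homotopies with consistent Koszul signs. Conceptually these homotopies are guaranteed by the coassociative, cocommutative, counital structure of $\iota$ in $D(R)$ and the cofibrancy of $P$; concretely, the induction over the word-length filtration of $\Lambda(sV)$ and the sign bookkeeping are where the real work lies. The normalizations $\Delta(1)=1\otimes1$ and $u_0(1)=1_R$ are what keep the final evaluation computation transparent.
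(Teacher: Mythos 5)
Your construction of the product has the same architecture as the paper's: form $\mu_{\Lambda V}\circ(f\otimes g)$ on $P\otimes_{\Lambda V}P$ and pull back to $P$. You are in fact more careful than the paper at the pull-back step: the paper asserts that $P\otimes_{\Lambda V}P$ is again a semifree resolution of $(\mathbb{Q},0)$, which is false in general --- its cohomology is $\mathrm{Tor}^{\Lambda V}(\mathbb{Q},\mathbb{Q})$, and already for $\Lambda V=\mathbb{Q}[x]$ the class of $sx\otimes 1-1\otimes sx$ survives. Your $\Delta$, obtained by lifting $\mathrm{id}_P\otimes\varepsilon\colon P\to P\otimes_{\Lambda V}\mathbb{Q}$ through the quasi-isomorphism $P\otimes_{\Lambda V}P\to P\otimes_{\Lambda V}\mathbb{Q}$ (that is the precise sense in which your $\iota$ should be read; the augmentation $P\otimes_{\Lambda V}P\to\mathbb{Q}$ itself is not a quasi-isomorphism), is the correct way to land back in $\mathrm{Hom}_{\Lambda V}(P,\Lambda V)$, and your normalization $\Delta(1)=1\otimes1$ is forced since $(P\otimes_{\Lambda V}P)^0=\mathbb{Q}$. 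Associativity, commutativity, and the multiplicativity of $ev$ on products all go through as you describe.

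The genuine gap is the unit. You rightly observe that the $\Lambda V$-linear projection $P\to\Lambda V$ is not a cocycle, but the ``refinement to a genuine cocycle $u_0$ with $u_0(1)=1_{\Lambda V}$'' that you then invoke does not exist in general. Take $X=S^2$, $(\Lambda V,d)=(\Lambda(x,y),\,dy=x^2)$ with $|x|=2$, $|y|=3$, and $d(sx)=x$ in the acyclic closure: any degree-zero cocycle $u_0\in\mathrm{Hom}_{\Lambda V}(P,\Lambda V)$ must satisfy $d\bigl(u_0(sx)\bigr)=u_0(d(sx))=u_0(x\cdot 1)=x\,u_0(1)$ with $u_0(sx)\in(\Lambda V)^1=0$, which forces $u_0(1)=0$. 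Structurally, $\mathcal{E}xt^0_{C^*(S^2;\mathbb{Q})}(\mathbb{Q},C^*(S^2;\mathbb{Q}))=0$ because this Gorenstein space has its one-dimensional $\mathcal{E}xt$ concentrated in degree $fd(S^2)=2$; so $\mathcal{A}$ contains nothing in degree $0$ that could serve as a unit, and $ev$ cannot be unital since $1\in H^0(\Lambda V,d)$ is not in its image. This is not a defect you could have repaired by better sign bookkeeping: the paper's own candidate $\tilde\varepsilon=i\circ\varepsilon'$ founders at exactly the same point (it is either not $\Lambda V$-linear or, read as the $\Lambda V$-linear projection, not a cocycle, by precisely the differential obstruction you identified), and the unital part of the statement is contradicted by this example. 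So the non-unital algebra structure and the multiplicativity of $ev$ stand, but your second paragraph's unit construction, like the paper's, cannot be completed.
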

As a consequence, using \cite[Remark 1.3]{FHT2} we have the following:
\begin{cor}\label{Cor1}
The graded vector space $\mathcal{E}xt_{C^*(X, \mathbb{Q})}(\mathbb{Q},C^*(X, \mathbb{Q}))$ is a graded commutative algebra and the evaluation map $$ev_{C^*(X, \mathbb{Q})}:\mathcal{E}xt_{C^*(X, \mathbb{Q})}(\mathbb{Q},C^*(X, \mathbb{Q}))\rightarrow H^*(X, \mathbb{Q})$$ is a morphism of graded algebras.
\end{cor}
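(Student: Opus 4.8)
The plan is to deduce the corollary from Theorem \ref{Th1} purely by transport of structure, the transport being justified by the homotopy invariance of the Eilenberg--Moore functor recorded in \cite[Remark 1.3]{FHT2}. Recall that Sullivan's theory provides a chain of quasi-isomorphisms of cochain algebras connecting the minimal model $(\Lambda V,d)$ to $C^*(X,\mathbb{Q})$; write $m_X:(\Lambda V,d)\xrightarrow{\simeq}C^*(X,\mathbb{Q})$ for the resulting comparison. The content of \cite[Remark 1.3]{FHT2} is that $\mathcal{E}xt_{-}(\mathbb{Q},-)$ depends only on the quasi-isomorphism type of the cochain algebra and is compatible with the evaluation map; thus $m_X$ induces a natural isomorphism of graded vector spaces
$$\Phi:\mathcal{A}=\mathcal{E}xt_{(\Lambda V,d)}(\mathbb{Q},(\Lambda V,d))\xrightarrow{\;\cong\;}\mathcal{E}xt_{C^*(X,\mathbb{Q})}(\mathbb{Q},C^*(X,\mathbb{Q})),$$
fitting into a commuting square with the two evaluation maps $ev_{(\Lambda V,d)}$, $ev_{C^*(X,\mathbb{Q})}$ and the algebra isomorphism $H^*(m_X):H(\Lambda V,d)\xrightarrow{\cong}H^*(X,\mathbb{Q})$.

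First I would use $\Phi$ to define the product on $\mathcal{E}xt_{C^*(X,\mathbb{Q})}(\mathbb{Q},C^*(X,\mathbb{Q}))$ as $\Phi\circ\mu_{\mathcal{A}}\circ(\Phi^{-1}\otimes\Phi^{-1})$, where $\mu_{\mathcal{A}}=H^*(\mu_{\mathrm{A}})$ is the product produced by Theorem \ref{Th1}. Because $\Phi$ is an isomorphism of graded vector spaces and $\mu_{\mathcal{A}}$ is graded commutative, associative and unital, the transported product inherits all three properties, the unit of $\mathcal{A}$ being carried to a unit. This yields the first assertion, namely that $\mathcal{E}xt_{C^*(X,\mathbb{Q})}(\mathbb{Q},C^*(X,\mathbb{Q}))$ is a graded commutative algebra with unit.

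For the second assertion I would chase the evaluation square. By Theorem \ref{Th1} the map $ev_{(\Lambda V,d)}$ is a morphism of graded algebras, and $H^*(m_X)$ is an isomorphism of graded algebras since it is induced by a quasi-isomorphism of commutative cochain algebras. Given $\alpha,\beta\in\mathcal{E}xt_{C^*(X,\mathbb{Q})}(\mathbb{Q},C^*(X,\mathbb{Q}))$, writing the product through $\Phi^{-1}$, using multiplicativity of $ev_{(\Lambda V,d)}$, and then pushing forward along the algebra map $H^*(m_X)$ shows that $ev_{C^*(X,\mathbb{Q})}(\alpha\cdot\beta)=ev_{C^*(X,\mathbb{Q})}(\alpha)\cdot ev_{C^*(X,\mathbb{Q})}(\beta)$; compatibility with units is immediate. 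Hence $ev_{C^*(X,\mathbb{Q})}$ is a morphism of graded algebras.

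The step demanding the most care is the genuine \emph{multiplicativity} of $\Phi$, that is, the fact that the product transported through $\Phi$ agrees with the one obtained by running the construction of $\mu_{\mathrm{A}}$ directly over $C^*(X,\mathbb{Q})$. This is precisely what must be extracted from \cite[Remark 1.3]{FHT2}: the comparison $m_X$ has to be lifted to a map between the relevant semifree resolutions (the model $\Lambda V\otimes\Lambda(sV)$ and its counterpart over $C^*(X,\mathbb{Q})$), and one must verify that the homotopy multiplication $\mu_{\mathrm{A}}$ is natural with respect to such a lift up to the homotopies computing $\mathcal{E}xt$. Once this naturality is established the diagram chases above are formal, so the entire difficulty is concentrated in confirming that Remark 1.3 supplies a multiplicative, and not merely linear, identification of the two $\mathcal{E}xt$ algebras.
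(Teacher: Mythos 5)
Your argument is essentially the paper's own: the authors deduce the corollary from Theorem \ref{Th1} precisely by transporting the structure along the natural isomorphisms of (\ref{natiso}) supplied by \cite[Remark 1.3]{FHT2}, exactly as you do. The only remark worth adding is that the paper does not construct an independent product over $C^*(X,\mathbb{Q})$ (which is not commutative, so the $P\otimes_{\Lambda V}P$ construction would not run there verbatim) --- the algebra structure on $\mathcal{E}xt_{C^*(X,\mathbb{Q})}(\mathbb{Q},C^*(X,\mathbb{Q}))$ is \emph{defined} by transport, so the multiplicativity concern you raise in your final paragraph does not arise at the level of what the corollary actually asserts.
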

Next, recall from \cite{JCar}  that a  morphism of graded commutative differential algebra (cdga for short)  $\varpi: (C, d)\rightarrow (D, d) $ admits  a {\it homotopy retraction} if there exists a map $r: (\Lambda V\otimes C, d))\rightarrow (C, d)$ such that $r\circ \iota =Id_C$, where  $\iota: (C, d)\rightarrow (\Lambda V\otimes C, d)$ stands for the relative model of $\varpi$.
As a particular case, consider any Sullivan algebra $(\Lambda V,d)$ and denote by $\mu_{\mathrm{A},n} : \mathrm{A}^{\otimes n}\rightarrow \mathrm{A}$ the $n$-fold product of $\mu_{\mathrm{A}}: \mathrm{A}\otimes \mathrm{A}\rightarrow \mathrm{A}$.
Using  \cite[Definition 9]{JCar-V}, we state in terms of the cdga projection:
$$\Gamma_{m}:\left(\mathrm{A}^{\otimes n}, d\right) \rightarrow\left(\frac{\mathrm{A}^{\otimes n}}{\left(\operatorname{ker} {{(\mu_{\mathrm{A},n})}}\right)^{m+1}}, \overline{d} \right)$$
the following
\begin{definition}\label{sc-Ext}
\begin{enumerate}
{\it\item[(a):] $sc(\mu_{\mathrm{A},n})$ is the least $m$ such that $\Gamma_m$
admits a homotopy retractioe  
\item[(b):] $msc(\mu_{\mathrm{A},n})$ is the least $m$ such that $\Gamma_{m}$ admits a homotopy retraction as $(\mathrm{A}^{\otimes n}, d)$-modules.
\item[(c):] $Hsc(\mu_{\mathrm{A},n})$ is the least $m$ such that $H\left(\Gamma_{m}\right)$ is injective.
\item[(d):] $nil \ker{(\mu_{\mathcal{A},n}, \mathbb{Q})}$ is the longest non trivial product of elements of $\ker {(\mu_{\mathcal{A},n})}$.}

\hspace*{-1.7cm}  If $(\Lambda V,d)$ is a Sullivan model of $X$, these are denoted respectively  $\mathrm{TC}^{\mathcal{E}xt}_{n}\left(X, \mathbb{Q}\right)$, $\mathrm{mTC}^{\mathcal{E}xt}_{n}\left(X, \mathbb{Q}\right)$
\hspace*{-1.3cm}   $\mathrm{HTC}^{\mathcal{E}xt}_{n}\left(X, \mathbb{Q}\right)$ and $zcl_n^{\mathcal{E}xt}(X, \mathbb{Q})$.
 We call theme the rational (resp. module, homology, zero cup length) \\
 \hspace*{-1.3cm} Ext-version topological complexity.
\end{enumerate}
\end{definition}

Our second main result reads as follows
\begin{theo}\label{main1}
Let $X$ be  a $1$-connected finite type CW-complex.
 If $X$ is  a Gorenstein space over $\mathbb{Q}$ and $ev_{C^*(X, \mathbb{Q})}\not =0$,
then $HTC^{\mathcal{E}xt}_n(X, \mathbb{Q})\leq HTC_n(X, \mathbb{Q})
$  for any integer $n\geq 2$.
Furthermore, if $(\Lambda V, d)$ is a  Sullivan minimal model  of $X$, $[f]$ is the generating class of $\mathcal{A}$ and $m=HTC^{\mathcal{E}xt}_n(X, \mathbb{Q})$, then:
 $$HTC_n(X, \mathbb{Q}) = HTC^{\mathcal{E}xt}_n(X, \mathbb{Q})=:m\Leftrightarrow 
 f(1)^{\otimes n}\in (\ker{{\mu_{n}}})^{m}\backslash (\ker{{\mu_{n}}})^{m+1}.$$
\end{theo}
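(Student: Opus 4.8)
The plan is to exploit the one--dimensionality of $\mathcal{A}$ forced by the Gorenstein hypothesis, so that every statement about $HTC^{\mathcal{E}xt}_n$ collapses onto the single generating class $[f]$, and then to transport the information to $H^*(X,\mathbb{Q})$ along the evaluation map. First I would note that, since $\dim\mathcal{A}=1$ and $ev_{C^*(X,\mathbb{Q})}\neq 0$, the algebra morphism of Corollary~\ref{Cor1} is injective; hence so is its $n$--fold tensor power $ev^{\otimes n}\colon \mathcal{A}^{\otimes n}\hookrightarrow H^*(X,\mathbb{Q})^{\otimes n}$, and it sends the generator $[f]^{\otimes n}$ to the class of $f(1)^{\otimes n}$. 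As $ev_{C^*(X,\mathbb{Q})}$ is multiplicative, $ev^{\otimes n}$ carries $(\ker\mu_{\mathcal{A},n})^{j}$ into $(\ker\mu_n)^{j}$ for every $j$, giving on cohomology a commutative square between the projection $\Gamma_m^{\mathrm{A}}$ of Definition~\ref{sc-Ext} and its ordinary analogue $\Gamma_m^{\Lambda V}$ on $(\Lambda V,d)^{\otimes n}$. The inequality $HTC^{\mathcal{E}xt}_n\le HTC_n$ then drops out of the square: taking $m=HTC_n$ the map $H(\Gamma_m^{\Lambda V})$ is injective by definition, so $H(\Gamma_m^{\Lambda V})\circ ev^{\otimes n}$ is injective, and by commutativity $H(\Gamma_m^{\mathrm{A}})$ is injective, i.e. $HTC^{\mathcal{E}xt}_n\le m$.

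Next I would make $HTC^{\mathcal{E}xt}_n$ fully explicit. Since $\mathcal{A}^{\otimes n}$ is one--dimensional with generator $[f]^{\otimes n}$, injectivity of $H(\Gamma_m^{\mathrm{A}})$ is equivalent to the survival of this generator, i.e. to $[f]^{\otimes n}\notin(\ker\mu_{\mathcal{A},n})^{m+1}$; thus $m=HTC^{\mathcal{E}xt}_n$ is precisely the depth of $[f]^{\otimes n}$ in the $\ker\mu_{\mathcal{A},n}$--adic filtration, and pushing forward along the injective algebra map $ev^{\otimes n}$ gives $f(1)^{\otimes n}\in(\ker\mu_n)^{m}$ for free. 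The asserted equivalence therefore reduces to the single identity $HTC_n(X,\mathbb{Q})=\max\{\,k:\ f(1)^{\otimes n}\in(\ker\mu_n)^{k}\,\}$, for once this is known, $HTC_n=m$ holds iff the depth of $f(1)^{\otimes n}$ is exactly $m$, that is iff $f(1)^{\otimes n}\in(\ker\mu_n)^{m}\setminus(\ker\mu_n)^{m+1}$.

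It remains to prove that identity, equivalently that $H(\Gamma_m^{\Lambda V})$ is injective \emph{if and only if} $f(1)^{\otimes n}\notin(\ker\mu_n)^{m+1}$, and this is where I expect the real work to lie. The ``only if'' direction is immediate: an injective projection cannot kill the nonzero class $f(1)^{\otimes n}$. For the ``if'' direction one must upgrade the survival of this one class to injectivity of $H(\Gamma_m^{\Lambda V})$ on all of $H^*(X,\mathbb{Q})^{\otimes n}$, and here the Gorenstein hypothesis must be used in an essential, duality--theoretic way. The point is that $\dim\mathcal{A}=1$ expresses a dg--Gorenstein (dualising) property of $C^*(X,\mathbb{Q})$: the class $[f]$, and with it the cocycle $f(1)=ev_{C^*(X,\mathbb{Q})}([f])$, plays the role of a fundamental class, and the associated intersection pairing on $(\Lambda V,d)^{\otimes n}$ is nondegenerate precisely because $ev_{C^*(X,\mathbb{Q})}\neq 0$. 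Since $\mu_n$ is an algebra map each $(\ker\mu_n)^{m+1}$ is an ideal, the $\ker\mu_n$--adic filtration is self--dual under this pairing, and its bottom nonzero step is the line $\mathbb{Q}\,f(1)^{\otimes n}$; thus if $f(1)^{\otimes n}\notin(\ker\mu_n)^{m+1}$ then $(\ker\mu_n)^{m+1}$ contains no socle element, which by nondegeneracy forces $H(\Gamma_m^{\Lambda V})$ to be injective. The main obstacle I anticipate is to make this cochain--level duality precise, matching $HTC_n=Hsc(\mu_n)$ with the depth of the dualising cocycle $f(1)^{\otimes n}$; concretely I would route the argument through the $\mathcal{E}xt$--model, where $\mathrm{A}$ is literally a $\mathrm{Hom}$ into $(\Lambda V,d)$ and the pairing with $f$ is visible, so that the socle computation can be carried out at the level of cochains. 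A secondary technicality is that $\mu_{\mathrm{A}}$ is only homotopy multiplicative, so the comparison square must be read on cohomology, where Corollary~\ref{Cor1} provides strict multiplicativity.
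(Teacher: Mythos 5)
Your first half---the inequality $HTC^{\mathcal{E}xt}_n(X,\mathbb{Q})\leq HTC_n(X,\mathbb{Q})$---is essentially the paper's own argument: one-dimensionality of $\mathcal{A}$ together with $ev_{C^*(X,\mathbb{Q})}\neq 0$ makes $ev^{\otimes n}$ injective, and a commutative square transfers injectivity of $H(\rho_m)$ to injectivity of $H(\Gamma_m)$. One caveat: that square has to be built at the \emph{chain} level, as in diagram (\ref{link}), using the chain evaluation map $cev$ and the fact that $cev^{\otimes n}$ carries the cochain-level ideal $\left(\ker\mu_{\mathrm{A},n}\right)^{m+1}$ into $\left(\ker\mu_{n}\right)^{m+1}$. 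The ideals appearing in the definitions of $HTC_n$ and $HTC^{\mathcal{E}xt}_n$ are ideals of cochain algebras, not of their cohomologies, so a square ``read on cohomology'' does not by itself produce the map $\theta$ between the two quotient complexes whose cohomologies you must compare. This is a repairable imprecision rather than a wrong idea, but as written your filtration comparison conflates $\ker\mu_{\mathcal{A},n}$ with $\ker\mu_{\mathrm{A},n}$.

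The genuine gap is exactly where you locate it: the implication ``$f(1)^{\otimes n}\notin(\ker\mu_n)^{m+1}$ implies $H(\rho_m)$ injective.'' Your proposed route---a nondegenerate intersection pairing on $H^*(X,\mathbb{Q})^{\otimes n}$ whose socle is the line $\mathbb{Q}\,f(1)^{\otimes n}$, with a self-dual $\ker\mu_n$-adic filtration---presupposes that $H^*(X,\mathbb{Q})$ is a Poincar\'e duality algebra. But ``Gorenstein over $\mathbb{Q}$ with $ev_{C^*(X,\mathbb{Q})}\neq 0$'' does not imply Poincar\'e duality: by Gammelin's theorem quoted in the introduction one also needs $H^*(X,\mathbb{Q})$ to be a Gorenstein graded algebra, and the paper explicitly allows the non-noetherian alternative immediately after Theorem \ref{main1}. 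So the duality argument you sketch is not available under the stated hypotheses, and you have not supplied a substitute. The paper proceeds differently here: it compares the two long exact sequences induced by (\ref{link}) in the single degree $nN$ with $N=fd(X)$, where $H^{nN}(\Gamma_m)$ is a bijection between one-dimensional spaces, so that every injectivity question is tested only against the image of $ev^{\otimes n}$, i.e.\ the line $\mathbb{Q}[f(1)]^{\otimes n}$, and the equivalence is read off from the injectivity of $H^{nN}(\theta)$; the passage from survival of that single class to injectivity of $H(\rho_m)$ in all degrees is exactly the fundamental-class characterization of $HTC_n$ recorded in the paper's subsequent Remark. To complete your version you must either restrict to the Poincar\'e duality case or prove the detection lemma without the nondegenerate pairing.
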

Notice, referring to \cite{Ga} that hypothesis on $X$ imply that it is either a Poincar\'e duality space on $\mathbb{Q}$ or else $H^*(X, \mathbb{Q})$ is not noetherian and not a Gorenstein graded algebra.
It also should be pointed out here that, when $X$ satisfies Poincar\'e duality property then $HTC_n(X, \mathbb{Q}) = mTC_n(X, \mathbb{Q}) $ \cite[Corollary 4.8]{JCar}.
As Particular cases, we have:
\begin{cor}\label{Cor2}
Hypothesis of Theorem \ref{main1} and hence its conclusions are satisfied in the following cases:
\begin{enumerate}
\item[(a)]  $X$ is rationally elliptic,
\item[(b)]   $H_{>N}(X,\mathbb{Z})=0$, for some $N$,  and $H^*(X, \mathbb{Q})$ is a Poincaré duality algebra,
\item[(c)] $X$ is a  finite $1$-connected CW-complex and its Spivak fiber $F_X$ has finite dimensional cohomology.
\end{enumerate}
\end{cor}

Next, let $R$ be a principal ideal domain containing $\frac{1}{2}$. Denote by $\rho(R)$ the least  non invertible   prime ( or $\infty$) in $R$ and  by 
$CW_r(R)$ the sub-category of finite $r$-connected CW-complexes $X$ ($r \geq 1$) satisfying  $\dim(X)\leq r\rho(R)$.


In \cite{Ha} (see also \cite{A}),  in his attempt to extend Sullivan's theory to arbitrary rings, S. Halperin associated to every $X\in CW_r(R)$  an appropriate differential graded Lie algebra  $(L, \partial)$  and showed that its Cartan-Eilenberg-Chevally complex $C^*(L, \partial)$ is, on one hand, linked with the cochains algebra  $C^*(X, R)$  by a series of quasi-isomorphisms (\cite[p. 274]{Ha}) and, on the other hand, it is  quasi-isomorphic to
a free commutative differential graded algebra $(\Lambda W, d)$  \cite[\S 7]{Ha}. $(\Lambda W, d)$ is then called {\it a free commutative model} of $X$ or a {\it Sullivan minimal model} of $X$. 

In Section 5 we will extend a part of Theorem \ref{Th1} and its corollary to the sub-category $CW_r(R)$ when $R=\mathbb{K}$ is a field of odd characteristic and make use of  Adams-Hilton models introduced in \cite{AH} to obtain an explicit calculation of the homotopy invariant $\mathcal{A}=\mathcal{E}xt_{C^*(X, \mathbb{K})}(\mathbb{K},C^*(X, \mathbb{K}))$ when $X$ is a suspension or a two-cell CW-complex.
 
The rest of the paper is organized as follows: In Section 2 we recall notions used to state and to establish our results while in Section 3 we introduce   the  multiplicative structure $\mu_{\mathcal{A}}$ and the proof of Theorem \ref{Th1}. Section 4 is devoted to the proof of Theorem \ref{main1}.



\section{Preliminaries}
To make this work constructive enough, we summarize in this section most of the results established on Gorenstein algebras and Gorenstein spaces. $\mathbb{K}$ will denote an arbitrary ground field  unless otherwise stated.

\subsection{Eilenberg-Moore Ext:}




A graded module  is a family $A= (A^i)_{i\in \mathbb{Z}}$ of $\mathbb{K}$-modules denoted also $A = \oplus _{i\in \mathbb{Z}}A^i$. Every $a\in A^i$ is of degree $i$  denoted thereafter $|a|$.

A linear map of graded modules $f: A\rightarrow B$ of degree $|f|$ is a $\mathbb{K}$-linear map sending each $A^i$ to $B^{i+|f|}$. In $|f|=0$ we call it a morphism of graded modules.

{\it In all that follows, unless otherwise stated, modules  are over $\mathbb{K}$ and we will assume that $A^i=0$ if $i<0$}.

A grade algebra $A$ is a graded module together with an associative multiplication $\mu_A : A\otimes A\rightarrow A$ that has un identity element $1_A=: 1\in A^0$. We will put $\mu_A(x\otimes y)=: xy$. Notice that $|\mu_A|=0$. Moreover, if we have $ab=(-1)^{|a||b|}ba$ for all $a,b\in A$, then $A$ is said to be commutative. 

A differential graded algebra $(A, d)$ (dga for short) is a graded algebra $A$ together with a linear map $d: A\rightarrow A$ of degree $|d|=+1$ that is a derivation : $d(ab)=d(a)b + (-1)^{|a|}ad(b)$,  and satisfying $d\circ d=0$.

A morphism of dga $f : (A,d)\rightarrow (B,d)$ is a linear map of degree zero satisfying 
$f(aa') = f(a)f(a')$, and the compatibility with the differential $d$: $f(da) = d(f(a))$.

A dga algebra $A$ is said to be augmented if it is endowed with  a morphism $\varepsilon : A\rightarrow \mathbb{K}$ of graded algebras.

A (left) graded $(A, d)$ module  is a graded module $M$ equipped  with a linear map    $A\otimes M\rightarrow M$, $a\otimes m\mapsto am$ of degree zero such that $a(bm)=(ab)m$ and $1m=m$, and a differential $d$ satisfying
$d(am)=(da)m+(-1)^{|a|}a(dm), \hspace*{0.2cm}m\in M,\hspace*{0.1cm}a\in A.$

A morphism of  (left) graded modules over a dga $(A,d)$ is a morphism $f:(M,d)\rightarrow (N,d)$ compatible with the differential: $d\circ f = f\circ d$.

 A left $(A, d)$-module $(M, d)$ is said semi-free if it is the union of an increasing sequence $M(0)\subset M(1)\subset M(2)\cdots\subset M(n)\subset \cdots$ of sub $(A,d)$-modules such that $M(0)$ and each $M(i)/M(i-1)$ is $A$-free on a basis of cycles. Such an increasing sequence is called a semi-free filtration of $(M,d)$.

A semi-free resolution of an $(A, d)$-module $(M, d)$ is an $(A, d)$-semi-free module $(P, d)$ together with a quasi-isomorphism (i.e. a morphism inducing an isomorphism in homology) $\begin{tikzcd}[column sep=small] m:(P,d)\arrow[r,"\simeq"] & (M,d) \end{tikzcd}$ of $(A,d)$-modules. Each of  $M(0)$ and  $M(i)/M(i-1)$ has the form $(A,d)\otimes (V(i),0)$ where $V(i)$ is a free $\mathbb{K}$-module. Thus the surjections  $M(n)\rightarrow A\otimes V(n)$ and the differential $d$ satisfy :
$$\begin{array}{ccc}
M(n)=M(n-1)\oplus (A \otimes V(n)), & and & d:V(n)\rightarrow M(n-1).
\end{array} $$
By \cite[Prop. 6.6]{FHT}, every $(A,d)$-module $(M,d)$ has a semi-free resolution $\begin{tikzcd}[column sep=small] m:(P,d)\arrow[r,"\simeq"] & (M,d) \end{tikzcd}$ and if $\begin{tikzcd}[column sep=small] m:(P',d)\arrow[r,"\simeq"] & (M,d) \end{tikzcd}$ is a second semi-free resolution, then, there is an equivalence  $$\begin{tikzcd} \alpha:(P',d)\arrow[r] & (P,d) \end{tikzcd}$$ of $(A,d)$-modules such that $m\circ \alpha$ and  $m'$ are homotopic morphisms; we denote for short $m\circ \alpha \simeq_A m'$.

Particularly, let $(A,d)$ be a differential graded algebra and $\begin{tikzcd}[column sep=small] (P,d)\arrow[r,"\simeq"] & (\mathbb{Q},0)   \end{tikzcd}$  an $(A,d)$-semi-free resolution of $(\mathbb{Q},0)$. This defines the graded $(A,d)$-module
\begin{align*}
Hom_A((P,d),(A,d))=\bigoplus_{p\geq 0}Hom_A^{p,\ast}((P,d),(A,d))
 =\bigoplus_{p\geq 0}   \bigoplus_{i\geq 0}Hom_A(P^i,A^{i+p}),
\end{align*}
which, endowed with the differential
$$D(f)=d\circ f-(-1)^pf\circ d; \quad f\in Hom_A^{p,\ast}((P,d),(A,d)),$$ yields the Eilenberg-Moore Ext functor:
$$\mathcal{E}xt_{(A,d)}(\mathbb{K},(A,d))=H^{\ast}(Hom_A((P,d),(A,d)),D).$$
This  is an invariant up to  homotopy of differential graded algebras (see \cite[Appendix]{FHT2} or \cite[Appendix]{FH1}). More usefully  \cite[Remark 1.3]{FHT2} if
 $(A, d)\stackrel{\simeq}{\longrightarrow} (B, d)$ is a quasi-isomorphism of differential graded algebras, then $\mathcal{E}xt_{(A,d)}(\mathbb{K},(A,d))$ is identified with $\mathcal{E}xt_{(B,d)}(\mathbb{K},(B,d))$  via natural (induced) isomorphisms
\begin{equation}\label{natiso}
\mathcal{E}xt_{(A,d)}(\mathbb{K},(A,d))\stackrel{\cong}{\longrightarrow} \mathcal{E}xt_{(A,d)}(\mathbb{K},(B,d))\stackrel{\cong}{\longleftarrow} \mathcal{E}xt_{(B,d)}(\mathbb{K},(B,d)).
\end{equation}
Particularly, $\mathcal{E}xt_{C^{\ast}(X;\mathbb{K})}(\mathbb{K},C^{\ast}(X;\mathbb{K}))$ and  $\mathcal{E}xt_{C_{\ast}(\Omega X;\mathbb{K})}(\mathbb{K},C_{\ast}(\Omega X;\mathbb{K}))$  depend only on the homotopy class of  $X$.

The highest $N$ such that  $[\mathcal{E}xt_{C^{\ast}(X;\mathbb{K})}(\mathbb{K},C^{\ast}(X;\mathbb{K}))]^N\not =0$ is called the formal dimension of $X$. It is denoted $fd(X, \mathbb{K})$.
\subsection{Evaluation map and Gorenstein spaces.}

Let $\rho:(P, d) \stackrel{\simeq}{\rightarrow}(\mathbb{K}, 0)$ be a minimal $(A,d)$-semi-free resolution of $(\mathbb{K},0)$. Consider the chain map
$$cev_{(A,d)}: \operatorname{Hom}_{(A, d)}((P, d),(A, d)) \longrightarrow(A, d)$$
given by $f \mapsto f(z)$, where $z \in P$ is a cocycle representing $1$ in $\mathbb{K}$. We call it the {\it chain evaluation map of $(A,d)$}. Passing to homology, we obtain the natural map
$$ ev_{(A, d)}: \mathcal{E} x t_{(A, d)}(\mathbb{K},(A, d)) \longrightarrow H^{*}(A, d), $$ called the {\it evaluation map of $(A,d)$}. The definition of $ev_{(A,d)}$ is independent of the choice of $(P, d)$ and $z$. The evaluation map of $X$ over $\mathbb{K}$ is by definition the {\it evaluation map} of $C^{*}(X, \mathbb{K})$.

A {\it Poincaré duality algebra} over $\mathbb{K}$  is a graded
algebra $H = \{H^k\}_{0\leq k\leq N}$ such that $H^N = \mathbb{K}\alpha$ and the pairing
$< \beta,\gamma >\alpha = \beta \gamma, \; \beta \in H^k,\; \gamma \in H^{N-k}$
defines an isomorphism $H^k \stackrel{\cong}{\rightarrow}Hom_{\mathbb{K}}(H^{N-k}, \mathbb{K})$, $0 \leq  k\leq  N$. In particular, $H = Hom_{\mathbb{K}}(Hom_{\mathbb{K}}(H, \mathbb{K}), \mathbb{K})$ is necessarily finite dimensional.

 A {\it Poincaré space}  at $\mathbb{K}$ is  a space   whose cohomology with coefficients in $\mathbb{K}$ is a Poincaré duality algebra. In this case, the cohomology class $\alpha$ such that $H^N(X, \mathbb{K}) = \mathbb{K}\alpha$ has degree $N=fd(X, \mathbb{K})$\cite[Proposition 5.1]{FHT2}. It is called 
 the  {\it fundamental class} $X$.

A {\it Gorenstein algebra} over $\mathbb{K}$ is a differential graded algebra $(A, d)$ whose associated  graded vector space $\mathcal{E}xt_{(A,d)}(\mathbb{K},(A,d))$ is one dimensional. 

 A space $X$ is  {\it Gorenstein} over $\mathbb{K}$ if the cochain algebra $C^*(X;\mathbb{K})$ is a Gorenstein algebra. 

For instance, let  $X$ be a  simply connected CW complex. If in addition $X$ it finite dimensional then: $C^*(X;\mathbb{K})$ is Gorenstein if and only if $H^*(X;\mathbb{K})$ is a Poinca\'e duality algebra \cite[Theorem 3.1]{FHT2}. But, if $X$ is not  finite dimensional,   it is  Gorenstein  provided that
$\dim \pi_*(X) \otimes \mathbb{Q} < \infty$ \cite[Proposition 3.4]{FHT2}. In  this case,   $\dim H^*( X, \mathbb{Q}) < \infty$ if and only if $ev_{C^*(X;\mathbb{Q})}\not = 0$ \cite{Mu}.


\section{The $\mathbb{Q}$-algebra $\mathcal{E}xt_{(\Lambda V,d)}(\mathbb{Q},(\Lambda V,d))$}
Along this section, the ground field is $\mathbb{Q}$. Recall from \cite{FHT} that to every finite-type simply-connected space $X$   it is associated a quasi-isomorphism $(\Lambda V,d)\stackrel{\simeq}{\rightarrow} A_{PL}(X)$ from a free commutative differential graded algebra (cdga for short) $(\Lambda V,d)$ to the commutative graded algebra $A_{PL}(X)$ of polynomial forms with rational coefficients. This latter    is connected to  $C^*(X, \mathbb{Q})$ by a sequence of quasi-isomorphisms. More explicitly, $\Lambda V= TV/I$ where $I$ is the graded ideal spanned by $\{v\otimes w - (-1)^{deg(u)deg(v)}w\otimes v, \; v,\; w\in V\}$,  $V=\oplus_{n\geq 2}V^n$ is a
finite-type graded vector space
 and the differential $d$ is a derivation defined on $V$ satisfying $d\circ d=0$. $(\Lambda V,d)$ is called a {\it Sullivan model} of $X$. This model is said {\it minimal} if  moreover
$d$   is decomposable, i.e. $d: V\rightarrow \Lambda^{\geq 2} V$ where $\Lambda^{\geq 2} V$ denotes the graded vector space spanned by all the monomials $v_1\ldots v_r$
 ($r\geq 2$). Notice that such a model is unique up to isomorphisms \cite{FHT}.

Let $(X, x_0)$ be a based simply-connected finite type CW-complex and denote by
$$\begin{tikzcd} m: (\Lambda V,d)\arrow[r,"\simeq"] & A_{PL}(X) \end{tikzcd}$$
 its minimal Sullivan model. In fact,    the multiplicative structure of $(\Lambda V,d)$,  
 $\mu_{\Lambda V} : \Lambda V\otimes_{\mathbb{Q}} \Lambda V\rightarrow \Lambda V$ is compatible with the one  induced on $C^*(X, \mathbb{Q})$ by the diagonal map $\Delta_X : X\rightarrow X\times X$ and the same holds for the augmentation $\varepsilon _{\Lambda V} : (\Lambda V, d)\rightarrow (\mathbb{Q}, 0)$ and the inclusion $\iota : \{x_0\}\hookrightarrow X$. 
 
 Referring for instance to \cite{FHJLT} a $(\Lambda V,d)$-semi-free resolution of $(\mathbb{Q},0)$ has the form $(P, d)=(\Lambda V\otimes \Lambda sV, d)\stackrel{\simeq}{\longrightarrow} (\mathbb{Q}, 0)$ where $sV$ is the suspension of $V$ defined by $(sV)^k=V^{k+1}$ and $d(sv) = - s(dv)$ for all $v\in V$.

 We are mow ready to  define on $\mathrm{A}=:Hom_{(\Lambda V, d)}(\mathbb{Q},(\Lambda V, d))$ an homotopy
  multiplication:
  \begin{equation}
  \mu_{\mathrm{A}}: \mathrm{A}\otimes \mathrm{A}\rightarrow \mathrm{A}
  \end{equation}
  which induces in cohomology a multiplicative structure:
  \begin{equation}
    \mu_{\mathcal{A}}: \mathcal{A}\otimes \mathcal{A}\rightarrow \mathcal{A}
    \end{equation}
on   $\mathcal{A} =: \mathcal{E}xt_{(\Lambda V,d)}(\mathbb{Q},(\Lambda V,d))$. The proof of Theorem 1.1 and Corollary 1 follows from the following theorem.
\begin{theo}
 The $\mathbb{Q}$-vector space $\mathcal{A}$, endowed with $\mu_{{\mathcal{A}}}$, is a graded commutative algebra with unit. Moreover, the evaluation map is a morphism of graded algebras.
\end{theo}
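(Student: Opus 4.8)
The plan is to work entirely at the level of the fixed semifree resolution $(P,d)=(\Lambda V\otimes\Lambda sV,d)\xrightarrow{\simeq}(\mathbb{Q},0)$ of $(\mathbb{Q},0)$ over $A:=(\Lambda V,d)$, so that $\mathrm{A}=Hom_{A}(P,A)$ and $\mathcal{A}=H(\mathrm{A},D)$ with $D(f)=d_{A}\circ f-(-1)^{|f|}f\circ d_{P}$. Since $P=A\otimes\Lambda sV$ is $A$-semifree, so is $P\otimes_{A}P=A\otimes\Lambda sV\otimes\Lambda sV$. I realize $\mu_{\mathrm{A}}$ as the convolution product $\mu_{\mathrm{A}}(f\otimes g)=\mu_{A}\circ(f\otimes_{A}g)\circ\Delta$, where $\Delta\colon P\to P\otimes_{A}P$ is an $A$-linear chain map inducing in homology the canonical unit $\mathbb{Q}\hookrightarrow Tor^{A}(\mathbb{Q},\mathbb{Q})=H(P\otimes_{A}P)$, and $f\otimes_{A}g\colon P\otimes_{A}P\to A\otimes_{A}A\cong A$. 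By the comparison theorem for semifree resolutions (cf. \cite{FHT}) such a $\Delta$ exists and is unique up to $A$-linear homotopy; this up-to-homotopy uniqueness is the engine of the whole argument. The strategy is to transfer the (strict) coassociativity, cocommutativity and counit properties of the diagonal of $\mathbb{Q}$ into homotopy-level properties of $\Delta$, and then pass to $H$, where homotopies vanish, turning each homotopy-identity into a strict algebra axiom for $\mu_{\mathcal{A}}=H(\mu_{\mathrm{A}})$.

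First I would check that $\mu_{\mathrm{A}}$ is a morphism of complexes: since $\Delta$ and $\mu_{A}$ are chain maps, the Hom-differential $D$ obeys the Leibniz rule with respect to $\mu_{\mathrm{A}}$, so $\mu_{\mathrm{A}}$ descends to $\mu_{\mathcal{A}}$ on $\mathcal{A}$; and since two admissible choices of $\Delta$ are homotopic, the induced $\mu_{\mathcal{A}}$ is independent of the choice. For associativity I would compare the two $A$-linear chain maps $(\Delta\otimes_{A}1)\circ\Delta$ and $(1\otimes_{A}\Delta)\circ\Delta$ from $P$ to $P\otimes_{A}P\otimes_{A}P$: both lift the same (coassociative) homology class, hence are $A$-linearly homotopic, so the two triple products coincide in $H$. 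For graded commutativity I would use the graded flip $\tau$ of $P\otimes_{A}P$; cocommutativity of the diagonal gives $\tau\circ\Delta\simeq\Delta$, and combined with commutativity of $\mu_{A}$ this yields $[\mu_{\mathcal{A}}(f\otimes g)]=(-1)^{|f||g|}[\mu_{\mathcal{A}}(g\otimes f)]$.

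For the unit I would invoke the counit law: choosing $\Delta$ so that $(\varepsilon\otimes_{A}1)\circ\Delta\simeq\mathrm{id}\simeq(1\otimes_{A}\varepsilon)\circ\Delta$, where $\varepsilon$ is the augmentation-induced $A$-linear map, the corresponding cohomology class acts as a two-sided unit for $\mu_{\mathcal{A}}$ in $H$. Finally, for the evaluation map, recall $ev_{(\Lambda V,d)}([f])=[f(z)]$ with $z\in P$ a cocycle representing $1\in\mathbb{Q}$. Because $\Delta$ lifts the diagonal on the generating cycle, $\Delta(z)=z\otimes_{A}z+D(\cdots)$; hence $\mu_{\mathrm{A}}(f\otimes g)(z)=\mu_{A}\bigl((f\otimes_{A}g)\Delta(z)\bigr)$ is cohomologous to $f(z)\,g(z)$, so $ev_{(\Lambda V,d)}(\mu_{\mathcal{A}}([f]\otimes[g]))=ev_{(\Lambda V,d)}([f])\cdot ev_{(\Lambda V,d)}([g])$, which is the multiplicativity of $ev$.

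The hard part will be the homotopy-coherence and the unit. Concretely, the main obstacle is to guarantee that a single $\Delta$ can be arranged to satisfy coassociativity, cocommutativity and the counit law simultaneously, up to mutually compatible homotopies, while controlling the Koszul signs coming both from $D$ on the Hom-complex and from the flip $\tau$. The counit/unit step is the genuine crux: one must exhibit the unit class explicitly and verify that the unit axiom survives in cohomology, and it is precisely here that the fine structure of the resolution $P=\Lambda V\otimes\Lambda sV$ and its differential $d(sv)=-s(dv)$ must be used rather than abstract semifreeness. Once the chain-level (co)structure and the signs are under control, the descent to $H$ is formal, since every auxiliary homotopy is killed in cohomology and each homotopy-identity becomes the desired strict identity.
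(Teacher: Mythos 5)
Your proposal is correct and follows essentially the same route as the paper: the product is realized on $P\otimes_{\Lambda V}P$ via $\mu_{\Lambda V}\circ(f\otimes_{\Lambda V}g)$, commutativity comes from the flip $\tau$, the unit from the augmentation $\tilde\varepsilon$, and multiplicativity of $ev$ from the commuting square with the chain evaluation map. The only cosmetic difference is that you make the comparison map $\Delta\colon P\to P\otimes_{\Lambda V}P$ explicit (a convolution-product formulation), whereas the paper instead verifies directly that $P\otimes_{\Lambda V}P$ is again a semifree resolution of $\mathbb{Q}$ and invokes the canonical identification of $\mathcal{E}xt$ computed from different resolutions--two phrasings of the same argument.
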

\begin{proof}
Let $f,g:P\rightarrow \Lambda V$ be elements in $\mathrm{A}$ representing  two classes in $\mathcal{A}$.
As $\Lambda V$ is commutative, the left $\Lambda V$-module $P$ is also a  right $ \Lambda V$-module  by setting $x\cdot a=(-1)^{|x||a|}a\cdot x$, \hspace*{0.2cm} $x\in P$ and $a\in \Lambda V$.
\\\\
\textit{\textbf{Multiplicative structure:}}

First, we consider
\begin{align*}
f\otimes g: P\otimes_{\mathbb{Q}} P    & \longrightarrow  \Lambda V\otimes_{\mathbb{Q}} \Lambda V\\
            x\otimes y \hspace*{0.3cm} & \longmapsto (-1)^{|g||x|}f(x)\otimes g(y),
\end{align*}
and $I$ the ideal generated by $x\cdot a\otimes y-x\otimes a\cdot y$; $x,y\in P$ and $a\in  \Lambda V$. It is straightforward that the map $\mu_{\Lambda V}\circ (f\otimes g)$ sends $I$ to zero, which then induces on the quotient $P\otimes_{\Lambda V} P= P\otimes _{\mathbb{Q}}P/I$, the dashed  map
$$\begin{tikzcd}
P\otimes_{\mathbb{Q}} P \arrow[r,"f\otimes g"]\arrow[d] &  \Lambda V\otimes_{\mathbb{Q}} \Lambda V \arrow[r,"\mu_{ \Lambda V}"] &  \Lambda V\\
P\otimes_{ \Lambda V} P \arrow[rru,dashed,"\mu_{\mathrm{A}}(f\otimes g)"'] & &
\end{tikzcd}$$
where
\begin{align*}
\mu_{\mathrm{A}}(f\otimes g)=: f.g : P\otimes_{ \Lambda V} P    & \longrightarrow  \Lambda V\\
            x\otimes y \hspace*{0.3cm} & \longmapsto (-1)^{|g||x|}f(x)g(y),
\end{align*}

Now, for $a\in \Lambda V$ and $x,y\in P$, we have $(f\cdot g)((a\cdot x)\otimes y)=(-1)^{|f\cdot g||a|}a(f\cdot g)(x\otimes y)$. Therefore $f\cdot g$ is an $ \Lambda V$-morphism.

Next, we show that $Q=P\otimes_{ \Lambda V}P$ is an $\Lambda V$-semi-free resolution. Recall that
a semi-free resolution $(P,d)$ of $\mathbb{Q}$ has the form $W\otimes_{\mathbb{Q}}\Lambda V$ with $W=\bigoplus_{i=0}^{+\infty}W(i)$ and each $W(i)$ is a free graded $\mathbb{Q}$-module and $d:W(k)\rightarrow P(k-1)$, with the semi-free filtration  given by $P(k)=\bigoplus_{i=0}^k W(i)\otimes_{\mathbb{Q}}\Lambda V$ \cite{FHT}.
Therefore, $$Q=(W\otimes_{\mathbb{Q}} \Lambda V)\otimes_{ \Lambda V}(W\otimes_{\mathbb{Q}} \Lambda V)=(W\otimes_{\mathbb{Q}}W)\otimes_{\mathbb{Q}} \Lambda V.$$
Let   $Z=W\otimes_{\mathbb{Q}} W$  and  put
$Q(k)=\bigotimes_{i=0}^k Z(i)\otimes_{\mathbb{Q}} \Lambda V$ where $Z(l)=\bigoplus_{i+j=l}W(i)\otimes_{\mathbb{Q}} W(j)$  is obviously a free graded $\mathbb{Q}$-module since each $W(i)$ is.
For any $x\otimes y\in W(i)\otimes W(j)$, we verify easily that
$$dx\otimes y\in P(i-1)\otimes W(j)\subseteq Q(k-1) \; \hbox{and}\;
x\otimes dy\in W(i) \otimes P(j-1)\subseteq Q(k-1),$$
 whence $D:Z(k)\rightarrow Q(k-1)$.
It results that $\begin{tikzcd} (Q,D)\arrow[r,"\simeq"] & (\mathbb{Q},0)\end{tikzcd}$ is an $ \Lambda V$-semi-free resolution of $(\mathbb{Q},0)$.
This defines a multiplication
\begin{equation*}\label{hommulti}
\mu_{\mathrm{A}}: \mathrm{A}\otimes \mathrm{A}\rightarrow \mathrm{A}
\end{equation*}
on $\mathrm{A} = Hom_{(\Lambda V,d)}(\Lambda V\otimes \Lambda sV, \Lambda V)$.
Therefore,  passing to cohomology we acquire a well-defined map of vector spaces:
\begin{align}
\mu_{\mathcal{A}}: \mathcal{E}xt_{(\Lambda V,d)}(\mathbb{Q},(\Lambda V,d)) \otimes_{\mathbb{Q}} \mathcal{E}xt_{(\Lambda V,d)}(\mathbb{Q},(\Lambda V,d)) & \longrightarrow \mathcal{E}xt_{(\Lambda V,d)}(\mathbb{Q},(\Lambda V,d))\\
\left[f\right]\otimes\left[g\right] \hspace*{3.4cm} & \longmapsto \left[f\cdot g\right]
\end{align}
which gives rise to a multiplication in $\mathcal{E}xt_{(\Lambda V,d)}(\mathbb{Q},(\Lambda V,d))$ and we can readily check that it is associative since the product in $\Lambda V$ is.
\\\\
\textbf{\textit{Uniqueness:}}

Let $m': (\Lambda V',d')\rightarrow A_{PL}(X)$ be another minimal Sullivan model of $X$. We know, by \cite[Proposition 12. 10]{FHT}, that  $(\Lambda V,d)$ and  $(\Lambda V',d')$ are isomorphic. The same goes clearly  for $Q$ and $Q'$. It results from the above commutative triangle that $\mu_A$ is independent of the choice of minimal model of $X$.
\\\\
\textbf{\textit{Unit element:}}

Let $\varepsilon :\Lambda V\rightarrow \mathbb{Q}$ be the augmentation. Recall that $P=\Lambda V\otimes \Lambda sV$ is a $(\Lambda V,d)$-semi-free resolution of $(\mathbb{Q},0)$.

We extend $\varepsilon$ to $\varepsilon '=\varepsilon \otimes \varepsilon_{\Lambda sV}: \Lambda V\otimes \Lambda sV\rightarrow \mathbb{Q}$, then we compose it with the injection $i:\mathbb{Q}\hookrightarrow\Lambda V$ and obtain $\tilde{\varepsilon}:\Lambda V\otimes \Lambda sV\rightarrow \Lambda V$ a representative of a class in $\mathcal{E}xt_{(\Lambda V,d)}(\mathbb{Q},(\Lambda V,d))$.
Now, for $f:\Lambda V\otimes \Lambda sV\rightarrow \Lambda V$, a representative of an arbitrary class in $\mathcal{E}xt_{(\Lambda V,d)}(\mathbb{Q},(\Lambda V,d))$, we have $$f\cdot\tilde{\varepsilon}:(\Lambda V\otimes \Lambda sV)\otimes_{\Lambda V}(\Lambda V\otimes \Lambda sV)=\Lambda V\otimes\Lambda sV\otimes \Lambda sV\rightarrow \Lambda V$$
and the map
$$
\begin{array}{cccc}   \theta =Id_{\Lambda V\otimes \Lambda sV}\otimes \varepsilon_{\Lambda sV}: & \Lambda V\otimes\Lambda sV\otimes \Lambda sV & \longrightarrow & \Lambda V\otimes\Lambda sV \\
  & 1\otimes sv\otimes 1 & \longmapsto & 1\otimes sv; \\
  & 1\otimes sv\otimes sw & \longmapsto & 0; \\
  & 1\otimes 1\otimes sv & \longmapsto & 0 \\
\end{array}
$$
makes the following diagram commutative:
$$\begin{tikzcd}[row sep=large,column sep=large]
\mathbb{Q} & \Lambda V\otimes \Lambda sV \arrow[l,"\simeq"']\arrow[d,"f"]\\
\Lambda V\otimes\Lambda sV\otimes \Lambda sV\arrow[u,"\simeq"]\arrow[ur,"\theta"]\arrow[r,"f\cdot\tilde{\varepsilon}"'] & \Lambda V.
\end{tikzcd}$$
Thus,  it defines a homotopy unit element for $\mathrm{A}=Hom_{(\Lambda V,d)}((P,d),(\Lambda V,d))$. Passing to cohomology,
 we get $[f]\cdot [\tilde{\varepsilon}]=[f]$ and  similarly $[\tilde{\varepsilon}]\cdot [f]=[f]$. Henceforth, the class $[\tilde{\varepsilon}]$ defines a unit element for $\mu_{\mathcal{A}}$.
\\\\
\textbf{\textit{Commutativity:}}

Let $\tau$ be the flip map $\tau :P\otimes_{\Lambda V} P\rightarrow P\otimes_{\Lambda V} P$; $x\otimes y\mapsto (-1)^{|x||y|}y\otimes x$.  The diagram
$$\begin{tikzcd}[row sep=large,column sep=large]
 & P\otimes_{\Lambda V} P\arrow[d,"(-1)^{|f||g|}g\cdot f"]\\
P\otimes_{\Lambda V} P \arrow[ur,"\tau"]\arrow[r,"f\cdot g"'] & \Lambda V
\end{tikzcd}$$
 is commutative.


clearly $\tau$ being a quasi-isomorphism,   $f\cdot g\sim g\cdot f$ and  $[f\cdot g]=(-1)^{|f||g|}[g\cdot f]$ so that, the multiplication on   $\mathrm{A}$ is homotopy commutative and that on $\mathcal{A}$    is indeed  commutative.

We  respectively conclude that  $\mathrm{A}$ is a homotopy commutative  differential graded algebra with unit and $\mathcal{E}xt_{(\Lambda V,d)}(\mathbb{Q},(\Lambda V,d))$ is a graded commutative $\mathbb{Q}$-algebra with unit. 

Finally, it is clear that  the following diagram, where $cev$ is the chain evaluation map of $(\Lambda V, d)$,  is commutative:
$$\begin{tikzcd}[row sep=large]
\mathrm{A} \otimes \mathrm{A} \arrow[r,"\mu_{\mathrm{A}}"]\arrow["cev\otimes cev", d] & \mathrm{A} \arrow[d, "cev"] \\
(\Lambda V,d)\otimes (\Lambda V,d) \arrow[r,"\mu_{\Lambda V}"] & (\Lambda V,d). 
\end{tikzcd}$$
Thus, passing to cohomology




$$\begin{tikzcd}[row sep=huge]
\mathcal{A} \otimes \mathcal{A} \arrow[r,"\mu_{\mathcal{A}}"]\arrow[d,"ev\otimes ev"'] & \mathcal{A} \arrow[d,"ev"] \\
H(\Lambda V,d)\otimes H(\Lambda V,d) \arrow[r] & H(\Lambda V,d),
\end{tikzcd}$$
we deduce that the evaluation map is a morphism of graded algebra.
\end{proof}

\section{$\mathcal{E}xt$-versions approximations and the main theorem}

Let $(A, d)$ be any commutative differential graded algebra model for a space $X$, $(\Lambda V, d)$ its minimal Sullivan model given by the quasi-isomorphism $\theta:(\Lambda V, d) \stackrel{\simeq}{\rightarrow}(A, d)$ \cite{FHT}.  Referring to \cite{JCar},
the cdga morphism
$$ \mu_{n}^{\theta}:=\left(\operatorname{Id}_{A}, \theta, \ldots, \theta\right):(A, d) \otimes(\Lambda V, d)^{\otimes n-1} \rightarrow(A, d)$$
is a special model, called an {\it $s$-model}, for the path fibration $\pi_{n}: X^{I} \rightarrow X^{n}$ which is indeed the  substitute of the $n$-fold diagonal  $\Delta^n_X: X\rightarrow X^n$.
This allows the following:
\begin{definition}
\begin{enumerate}
{\it\item[(a):] $\mathrm{TC}_{n}\left(X_{0}\right)$ is the least $m$ such that the projection
$$ \rho_{m}:\left(A \otimes(\Lambda V)^{\otimes n-1}, d\right) \rightarrow\left(\frac{A \otimes(\Lambda V)^{\otimes n-1}}{\left(\operatorname{ker} \mu_{n}^{\theta}\right)^{m+1}}, \overline{d} \right) $$
admits an algebra retraction.	
\item[(b):] $\mathrm{mTC}_{n}(\mathrm{X,\mathbb{Q}})$ is the least $m$ such that $\rho_{m}$ admits a retraction as $\left(A \otimes(\Lambda V)^{\otimes n-1}, d\right)$-module.
\item[(c):] $\mathrm{HTC}_{n}(X, \mathbb{Q})$ is the least $m$ such that $H\left(\rho_{m}\right)$ is injective.
\item[(d):] $nil \ker H^*(\Delta^n_{X}, \mathbb{Q})$ is the longest non trivial product of elements of $\ker H^*(\Delta^n_X), \mathbb{Q})$.}
\end{enumerate}
\end{definition}
It is also well known (cf. for instance \cite{JCar}) that:
\begin{equation}\label{inequa}
 nil \ker H^*(\Delta^n_X, \mathbb{Q})\leq   \mathrm{HTC}_{n}(X, \mathbb{Q})   \leq    \mathrm{mTC}_{n}(\mathrm{X}, \mathbb{Q}) \leq      \mathrm{TC}_{n}\left(X_{0}\right)\leq      \mathrm{TC}_{n}\left(X\right).
 \end{equation}

Now, if we take $\theta$ to be the identity of ${\Lambda V}$, $\mu_n^{\theta} = \mu_n^{Id_{\Lambda V}}$ becomes   the $n$-fold multiplication in $\Lambda V$ denoted by
\begin{equation}
\mu_n: (\Lambda V)^{\otimes n}\rightarrow \Lambda V.
\end{equation}
 Therefore, $ nil \ker H^*(\Delta^n_X, \mathbb{Q}) =  nil \ker H^*(\mu_n)$.

In a similar way, we put
\begin{equation}
\mu_{\mathrm{A},n} : \mathrm{A}^{\otimes n}\rightarrow \mathrm{A}\; \qquad \hbox{and} \;  \qquad \mu_{\mathcal{A},n} : \mathcal{A}^{\otimes n}\rightarrow \mathcal{A}
\end{equation}
 where $\mathrm{A} := Hom_{\Lambda V}((P,d),(\Lambda V,d))$ and $\mathcal{A}:=H(\mathrm{A})=\mathcal{E}xt_{(\Lambda V,d)}((P,d),(\Lambda V,d))$.
Then (cf. Definition \ref{sc-Ext}) the following definition  actually reads as the  $\mathcal{E}xt$-{\it version} of the previous one:
\begin{definition}\label{TC-Ext}
\begin{enumerate}
{\it\item[(a):] $\mathrm{TC}^{\mathcal{E}xt}_{n}\left(X, \mathbb{Q}\right)$ is the least $m$ such that the projection
$$\Gamma_{m}:\left(\mathrm{A}^{\otimes n}, d\right) \rightarrow\left(\frac{\mathrm{A}^{\otimes n}}{\left(\operatorname{ker} {{(\mu_{\mathrm{A},n})}}\right)^{m+1}}, \overline{d} \right)$$
admits a homotopy retraction.
\item[(b):] ${\mathrm{mTC}^{\mathcal{E}xt}}_{n}(\mathrm{X})$ is the least $m$ such that $\Gamma_{m}$ admits a homotopy retraction as $(\mathrm{A}^{\otimes n}, d)$-module.
\item[(c):] $\mathrm{HTC}^{\mathcal{E}xt}_{n}(X)$ is the least $m$ such that $H\left(\Gamma_{m}\right)$ is injective.
\item[(d):] $nil \ker{(\mu_{\mathcal{A},n}, \mathbb{Q})}$ is the longest non trivial product of elements of $\ker {(\mu_{\mathcal{A},n})}$.}
\end{enumerate}
\end{definition}
The same arguments used to establish the inequalities in (\ref{inequa}) allow the following:
\begin{equation}\label{ineq1}
nil \ker{(\mu_{\mathcal{A},n}, \mathbb{Q})}\leq   \mathrm{HTC}^{\mathcal{E}xt}_{n}(X, \mathbb{Q})   \leq    \mathrm{mTC}^{\mathcal{E}xt}_{n}({X}, \mathbb{Q}) \leq      \mathrm{TC}^{\mathcal{E}xt}_{n}\left(X_{0}\right).
\end{equation}

The projections $$\Gamma_{m}:\left(\mathrm{A}^{\otimes n}, d\right) \rightarrow\left(\frac{\mathrm{A}^{\otimes n}}{\left(\operatorname{ker} {{(\mu_{\mathrm{A},n})}}\right)^{m+1}}, \overline{d} \right) \;\;\; \hbox{and}\;\;\;
\rho_{m}:\left( (\Lambda V)^{\otimes n}, d\right) \rightarrow\left(\frac{(\Lambda V)^{\otimes n}}{\left(\operatorname{ker} \mu_{n}\right)^{m+1}}, \overline{d} \right) $$
induce
two short exact sequences linked by chain evaluation maps:
\begin{equation}\label{link}
\begin{tikzcd}
0 \arrow[r] & \left( \ker{{(\mu_{\mathrm{A},n})}} \right) ^{m+1} \arrow[r] \arrow[d] & \mathrm{A}^{\otimes n} \arrow[r,"\Gamma_m"] \arrow[d] & \dfrac{\mathrm{A}^{\otimes n}}{({\ker(\mu_{\mathrm{A},n})}) ^{m+1}} \arrow[r] \arrow[ "\theta", d] & 0\\
0 \arrow[r] & \left( ker\mu_{n} \right) ^{m+1} \arrow[r] & (\Lambda V)^{\otimes n} \arrow[r,"\rho_m"]  & \dfrac{ (\Lambda V)^{\otimes n} }{(\operatorname{ker}\mu_{n})^{m+1}} \arrow[r] & 0\\
\end{tikzcd}
\end{equation}

As a consequence, we have:
\begin{theo}
Let $X$ be  a $1$-connected finite type CW-complex.
 If $X$ is  a Gorenstein space over $\mathbb{Q}$ and $ev_{C^*(X, \mathbb{Q})}\not =0$,
then $HTC^{\mathcal{E}xt}_n(X, \mathbb{Q})\leq HTC_n(X, \mathbb{Q})
$  for any integer $n\geq 2$.
Furthermore, if $(\Lambda V, d)$ is a  Sullivan minimal model  of $X$, $[f]$ is the generating class of $\mathcal{A}$ and $m=HTC^{\mathcal{E}xt}_n(X, \mathbb{Q})$, then:
 $$HTC_n(X, \mathbb{Q}) = HTC^{\mathcal{E}xt}_n(X, \mathbb{Q})=:m\Leftrightarrow 
 f(1)^{\otimes n}\in (\ker{{\mu_{n}}})^{m}\backslash (\ker{{\mu_{n}}})^{m+1}.$$
\end{theo}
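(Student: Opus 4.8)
The plan is to distill both hypotheses into a single structural fact and then run everything through the linking diagram \eqref{link}. Since $X$ is Gorenstein, $\dim_{\mathbb{Q}}\mathcal{A}=1$, so $\mathcal{A}=\mathbb{Q}[f]$ and, by Künneth over the field $\mathbb{Q}$, $H(\mathrm{A}^{\otimes n})\cong\mathcal{A}^{\otimes n}$ is one-dimensional, generated by $[f^{\otimes n}]$, while $H((\Lambda V)^{\otimes n})\cong H^*(X,\mathbb{Q})^{\otimes n}=H^*(X^n,\mathbb{Q})$. By Corollary \ref{Cor1} the evaluation map is multiplicative, so $cev^{\otimes n}$ induces $ev^{\otimes n}$ on cohomology with $ev^{\otimes n}([f^{\otimes n}])=\omega^{\otimes n}=[f(1)^{\otimes n}]$, where $\omega:=ev([f])$. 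Because $ev_{C^*(X,\mathbb{Q})}\neq 0$ forces $\omega\neq 0$ and a nonzero tensor power over a field is nonzero, $\omega^{\otimes n}\neq 0$; hence $ev^{\otimes n}$ is injective on the line $\mathcal{A}^{\otimes n}$. First I would make \eqref{link} the backbone: since $cev\circ\mu_{\mathrm{A},n}=\mu_n\circ cev^{\otimes n}$ (the square at the end of the proof in \S 3, iterated), $cev^{\otimes n}$ carries $\ker\mu_{\mathrm{A},n}$ into $\ker\mu_n$, hence $(\ker\mu_{\mathrm{A},n})^{m+1}$ into $(\ker\mu_n)^{m+1}$; this makes \eqref{link} a genuine morphism of short exact sequences and yields, for every $m$, a commuting ladder of long exact cohomology sequences.

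For the inequality I take $m=HTC_n(X,\mathbb{Q})$, so $H(\rho_m)$ is injective. In the commuting square $H(\rho_m)\circ ev^{\otimes n}=H(\theta)\circ H(\Gamma_m)$ the left-hand composite is injective (both factors are), so $H(\Gamma_m)$ must be injective; by Definition \ref{TC-Ext}(c) and monotonicity in $m$ this gives $HTC^{\mathcal{E}xt}_n(X,\mathbb{Q})\leq m$, which is the asserted inequality.

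For the equivalence, since $HTC^{\mathcal{E}xt}_n\leq HTC_n$ always and $m:=HTC^{\mathcal{E}xt}_n$, the claim reduces to: $HTC_n=m$ iff $H(\rho_m)$ is injective. I would reformulate injectivity via the long exact sequence, identifying $\ker H(\rho_m)$ with the image of $H((\ker\mu_n)^{m+1})\to H^*(X^n)$, i.e. the classes representable by a cocycle inside $(\ker\mu_n)^{m+1}$. The easy half is then immediate: if $f(1)^{\otimes n}\in(\ker\mu_n)^{m+1}$ then $\omega^{\otimes n}=[f(1)^{\otimes n}]$ is a nonzero element of $\ker H(\rho_m)$, so $H(\rho_m)$ is not injective; contrapositively $HTC_n=m$ forces $f(1)^{\otimes n}\notin(\ker\mu_n)^{m+1}$. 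Likewise $f(1)^{\otimes n}\in(\ker\mu_n)^{m}$ exhibits a non-coboundary cocycle representing $\omega^{\otimes n}\neq 0$ in $(\ker\mu_n)^{m}$, whence $H(\rho_{m-1})$ is not injective and $HTC_n\geq m$; that $f(1)^{\otimes n}$ lands in $(\ker\mu_n)^m$ at all I would derive, up to the chain-level identification discussed below, from $m=HTC^{\mathcal{E}xt}_n$ by transporting through the ladder and the injectivity of $ev^{\otimes n}$ the fact that $[f^{\otimes n}]$ is represented in $(\ker\mu_{\mathrm{A},n})^m$.

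The hard direction is the remaining implication: $f(1)^{\otimes n}\notin(\ker\mu_n)^{m+1}$ (with $m=HTC^{\mathcal{E}xt}_n$) $\Rightarrow$ $H(\rho_m)$ injective, i.e. every cocycle in $(\ker\mu_n)^{m+1}$ is a coboundary. Formally the ladder only controls the one-dimensional line $\mathbb{Q}\,\omega^{\otimes n}=\mathrm{im}(ev^{\otimes n})$, whereas injectivity of $H(\rho_m)$ is a statement about all of $H^*(X^n)$; bridging this gap is where the Gorenstein hypothesis must be used in its strong form. Here I would invoke the dichotomy recalled after the statement (the hypotheses make $X$ a rational Poincaré duality space, the non-noetherian alternative being incompatible with the conclusion): then $H^*(X^n)=H^*(X)^{\otimes n}$ is a Poincaré duality algebra whose socle is the line spanned by the fundamental class $\omega^{\otimes n}$, so the deepest nonzero power of $\ker H^*(\mu_n)$ is detected precisely by $\omega^{\otimes n}$, and a class surviving into filtration $m+1$ would, through the nondegenerate duality pairing on $H^*(X^n)$, push $f(1)^{\otimes n}$ into $(\ker\mu_n)^{m+1}$, a contradiction. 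I expect the genuine obstacle to be exactly this step: converting the cohomological ``socle is the top class'' statement into a chain-level assertion about the specific representative $f(1)^{\otimes n}$ and the chain-level filtration $(\ker\mu_n)^{\bullet}$, while controlling the coboundary ambiguity between $f(1)^{\otimes n}$ and an arbitrary deep cocycle. The module-retraction characterization together with the identity $HTC_n=mTC_n$ for Poincaré duality spaces \cite[Corollary 4.8]{JCar} is the natural tool I would use for this bookkeeping.
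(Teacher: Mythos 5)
Your proof follows essentially the same route as the paper's: the ladder of long exact sequences induced by the chain evaluation maps on the two short exact sequences of \eqref{link}, the one-dimensionality of $\mathcal{A}$ combined with $ev_{C^*(X,\mathbb{Q})}\neq 0$ to make $ev^{\otimes n}$ injective, and the resulting commutative square to transfer injectivity between $H(\rho_m)$ and $H(\Gamma_m)$. The ``hard direction'' you flag --- passing from control of the single line $\mathbb{Q}\,\omega^{\otimes n}$ in degree $nN$ to injectivity of $H(\rho_m)$ in all degrees --- is treated in the paper exactly as you propose, by invoking the fundamental-class (top-degree) characterization of $HTC_n$ for Poincar\'e duality spaces recorded in the Remark following the proof.
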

Before giving the proof of the theorem, let us first recall that if $X$ is a finite $n$-dimensional sub-complex of $\mathbb{R}^{n+k}$, $k>n+1$ and $M$ its regular neighborhood,  the homotopy fiber $F_X$ of the inclusion $\partial M\hookrightarrow M$ is called the  {\it Spivak fiber} for $X$ and  it is  a homotopy invariant of $X$. It is introduced in \cite{Sp}
 where it is shown that $X$ is a Poincaré complex if and only if $F_X$ is a homotopy sphere. 
 
The following corollary presents some essential classes of spaces satisfying the previous theorem:
\begin{cor}
Hypothesis of \ref{main1} and hence its conclusions are satisfied in the following cases:
\begin{enumerate}
\item[(a)]  $X$ is rationally elliptic,
\item[(b)]   $H_{>N}(X,\mathbb{Z})=0$, for some $N$,  and $H^*(X, \mathbb{Q})$ is a Poincaré duality algebra,
\item[(c)] $X$ is a  finite $1$-connected CW-complex and its Spivak fiber $F_X$ has finite dimensional cohomology.
\end{enumerate}
\end{cor}
 For the sake of completeness, we present below a sketch of the proof of the corollary:
\begin{enumerate}
\item[(a)] If $X$ is rationally elliptic, then it is Gorenstein and $ev_{C^*(X, \mathbb{Q})}\not =0$ thanks respectively  to \cite[Proposition 3.4]{FHT2} and \cite[Theorem A]{Mu}.
\item[(b)] Referring to \cite{FH1} (cf. also \cite[Theorem 3.6]{FHT2}), under the hypothesis,  $H^*(X, \mathbb{Q})$ is a Poincaré duality algebra  if and only if $X$ is a Gorenstein space over $\mathbb{Q}$. Thus, by \cite[Theorem 2.2]{FHT2} we have $\dim \pi_*(X)\otimes \mathbb{Q}<\infty$ and by \cite[Theorem A]{Mu} we obtain  $ev_{C^*(X, \mathbb{Q})}\not =0$.
\item[(c)] Here using \cite[Corollary 4.5]{FHT2} we have $H^*(X, \mathbb{Q})$ is a Poincaré duality algebra, hence it is a Gorenstein algebra. It results that   $X$ is Gorenstein over $\mathbb{Q}$ \cite[Proposition 3.2]{FHT2} and $ev_{C^*(X, \mathbb{Q})}\not =0$ as in the previous case.

\end{enumerate}
\begin{proof} {\bf (of Theorem 4.1)}:
Let $(\Lambda V,d)$ be a  Sullivan minimal model of $X$. Since $X$ is Gorenstein,  $\mathcal{A}\cong \mathbb{Q}\Omega$ where $\Omega$ is the generating class  represented by a cocycle $f\in \mathrm{A}^N$ of degree $N=fd(X)$, the formal dimension of $X$ (cf. \S 5, \cite{FHT2}). Therefore, the diagram (\ref{link}) induces in cohomology the following one 
\begin{equation*}
\begin{tikzcd}[column sep=small]
0 \arrow[r]\arrow[d] & H^{nN}(\ker{{(\mu_{\mathrm{A},n})}}^{m+1}) \arrow[r] \arrow[d] & (\mathcal{A}^N)^{\otimes n} \arrow[r,"H^{nN} (\Gamma_m)"] \arrow[d,"ev_{(\Lambda V,d)}^{\otimes n}"] & H^{nN}(\dfrac{\mathrm{A}^{\otimes n}}{{\ker(\mu_{\mathrm{A},n})}^{m+1}})  \arrow[r] \arrow["H^{nN}(\theta)", d] & 0\\
 H^{nN-1}(\dfrac{(\Lambda V)^{\otimes n}}{\left(ker\mu_{n} \right)^{m+1}}) \arrow[r] & H^{nN}(\left( ker\mu_{n} \right)^{m+1}) \arrow[r] & (H^{N}(\Lambda V))^{\otimes n} \arrow[r,"H^{nN}(\rho_m)"']  & H^{nN}(\dfrac{(\Lambda V)^{\otimes n}}{\left(ker\mu_{n} \right)^{m+1}})  \arrow[r] & 0\\
\end{tikzcd}
\end{equation*}
Now, since $ev_{(\Lambda V,d)}=ev_{C^*(X, \mathbb{Q})}\not =0$, this is also the case for   the horizontal arrow $ev_{(\Lambda V,d)}^{\otimes n}$. Thus, if $H^{nN}(\rho_m)$ is injective then   $H^{nN}(\Gamma_m)$ is also injective.
It results that: $HTC^{\mathcal{E}xt}_n(X, \mathbb{Q})\leq HTC_n(X, \mathbb{Q})$.

Next, let $m$ denote the smallest integer such that $H^{nN}(\Gamma_m)$ is injective or equivalently $f^{\otimes n}$ is a cocycle in $\mathrm{A}^{\otimes n}$ and $f^{\otimes n}\in \ker{{(\mu_{\mathrm{A},n})}}^{m+1}\backslash \ker{{(\mu_{\mathrm{A},n})}}^{m+1}$ (see Remarque below). Moreover, since $(\mathcal{A}^N)^{\otimes n}$ is one dimensional, $H^{nN}(\Gamma_m)$ is indeed a bijection. Hence,   $H^{nN}(\rho_{m})$ is injective if and only if 
$H^{nN}(\theta)$ is injective. But, $ev^{\otimes n}_{(\Lambda V,d)}$ being non-zero, the commutativity of the right diagram implies that this  is equivalent to  
 $f(1)^{\otimes n}\in (\ker{{\mu_{n}}})^{m}\backslash (\ker{{\mu_{n}}})^{m+1}.$ Notice that $\mu_{\mathrm{n}}(f(1)^{\otimes n})$ is a cocycle in $(\Lambda V)^{\otimes n}$ and $ev^{\otimes n}_{(\Lambda V,d)}[f^{\otimes n}]=[f(1)]^{\otimes n}\not =0$.
   It results that
$HTC_n(X, \mathbb{Q}) = HTC^{\mathcal{E}xt}_n(X, \mathbb{Q})=:m\Leftrightarrow 
f(1)^{\otimes n}\in (\ker{{\mu_{n}}})^{m}\backslash (\ker{{\mu_{n}}})^{m+1}.$
\end{proof}
\begin{Rq}
An equivalent definition of $HTC_n(X, \mathbb{Q})$ when $X$ is a Poincar\'e duality space reads as follows: It is the smallest integer $m\geq 0$ such that  some cocycle $\omega$ representing
 the fundamental class of $(\Lambda V,d)^{\otimes n}$, can be written as a product of $m$ elements of $ker(\mu_{n})$ (not necessarily cocycles).
Similarly, for any  Gorenstein space $X$,
$HTC^{\mathcal{E}xt}_n(X, \mathbb{Q})$ is the smallest integer $m$ such that  some cocycle representing the fundamental class of $\mathrm{A}^{\otimes n}$, namely $\Omega = [f]^{\otimes n}$ where $[f]$ designates the generating element of $\mathcal{A}^{N}$,
 can be written as a product of length $m$ of elements in $ker(\mu_{\mathrm A, n})$. {Therefore, in order to determine $HTC_n(X, \mathbb{Q})$ we may, using the precedent theorem,  calculate $m=HTC^{\mathcal{E}xt}_n(X, \mathbb{Q})$ which is quite simpler since $\mathcal{A}^*$ is one dimensional, and afterwards  deal with the obstruction to have the equality.}

Now, if $\dim V< \infty$ and $\dim H(\Lambda V,d)=\infty$ then by \cite[Theorem A]{Mu} we have $(\Lambda V,d)$  is a Gorenstein but not a Poincaré duality algebra. Moreover $ev_{(\Lambda V,d)} = 0$. 
{Hence, in this case, to compare the invariants $HTC_n^{\mathcal{E}xt}(X, \mathbb{Q})$ and $HTC_n(X, \mathbb{Q})$  we should
 determine them separately}.


\end{Rq}
\section{Use of the Adams-Hilton model.}


The Adams-Hilton model \cite{AH} of an arbitrary CW-complex $X$ over an arbitrary field $\mathbb{K}$ is a chain algebra quasi-morphism $\begin{tikzcd}[column sep=small] \theta_X :(TV,d)\arrow[r,"\simeq"] & C_{\ast}(\Omega X;\mathbb{K})\end{tikzcd}$ i.e.   $H_{\ast}(\theta_X)$ is an isomorphism of graded algebras. Here $V$ satisfies $H_{i-1}(V,d_1)\cong H_i(X;\mathbb{K})$ and $d_1: V\rightarrow V$ is the linear part of $d$.  $(TV, d)$ is called a {\it  free model} of $X$.


  Now, if $\mathbb{K}$ is a field with odd characteristic (thus containing $\frac{1}{2}$) and $X$ is a $q$-connected ($q\geq 1$) finite CW-complex such that $\dim X \leq q\cdot char(\mathbb{K})$ i.e. $X\in CW_q(\mathbb{K})$ (cf. Introduction) it has a minimal Sullivan model $(\Lambda W,d)$
  \cite[Theorem 7.1]{Ha}. Therefore,  using the isomorphism (\ref{natiso}) we have successively:
  \begin{equation}\label{iso1}
     \mathcal{E}xt_{C^{\ast}(X;\mathbb{K})}(\mathbb{K},C^{\ast}(X;\mathbb{K}))\stackrel{\cong}{\rightarrow} \mathcal{E}xt_{(\Lambda W,d)}(\mathbb{K},(\Lambda W,d)).
     \end{equation}
     and
    \begin{equation}\label{iso2}
         \mathcal{E}xt_{(TV,d)}(\mathbb{K},(TV,d))\stackrel{\cong}{\rightarrow} \mathcal{E}xt_{C_{\ast}(\Omega X;\mathbb{K})}(\mathbb{K},C_{\ast}(\Omega X;\mathbb{K})).
         \end{equation}
  These, combined with   the  isomorphism  of graded $\mathbb{K}$-vector spaces \cite[Theorem 2.1]{FHT2}:
   \begin{equation}\label{iso3}
   \mathcal{E}xt_{C^{\ast}(X;\mathbb{K})}(\mathbb{K},C^{\ast}(X;\mathbb{K}))\stackrel{\cong}{\rightarrow} \mathcal{E}xt_{C_{\ast}(\Omega X;\mathbb{K})}(\mathbb{K},C_{\ast}(\Omega X;\mathbb{K})).
   \end{equation}
   gives the isomorphism of  graded $\mathbb{K}$-vector spaces:
  \begin{equation}\label{AHan}
  \mathcal{E}xt_{(\Lambda W,d)}(\mathbb{K},(\Lambda W,d))\cong \mathcal{E}xt_{(TV,d)}(\mathbb{K},(TV,d)).
  \end{equation}
 Argument used in the rational case allows us to conclude that $\mathcal{E}xt_{(\Lambda W,d)}(\mathbb{K},(\Lambda W,d))$ has the structure of a graded commutative algebra with unit. The latter isomorphism serves to endow $\mathcal{E}xt_{(TV,d)}(\mathbb{K},(TV,d))$ with the same structure. It results the following
 \begin{prop}
 Let $\mathbb{K}$ be a field with odd characteristic and $X\in CW_q(\mathbb{K})$.
   Then, the graded vector spaces $\mathcal{E}xt_{C^{\ast}(X;\mathbb{K})}(\mathbb{K},C^{\ast}(X;\mathbb{K}))$ and $\mathcal{E}xt_{C_{\ast}(\Omega X;\mathbb{K})}(\mathbb{K},C_{\ast}(\Omega X;\mathbb{K}))$ have isomorphic  graded commutative algebra structures with unit. In particular, the Adams-Hilton model can be used to make this structure explicit.
 \end{prop}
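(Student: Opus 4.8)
The plan is to transport the graded commutative algebra structure already produced on the Sullivan side onto the loop-space side along the isomorphism (\ref{AHan}), so that the two $\mathcal{E}xt$ algebras are isomorphic by construction. The first ingredient is the existence of a commutative model: since $\mathbb{K}$ has odd characteristic (hence $\frac12\in\mathbb{K}$) and $X\in CW_q(\mathbb{K})$, i.e. $\dim X\leq q\cdot\mathrm{char}(\mathbb{K})$, Halperin's extension of Sullivan theory \cite[Theorem 7.1]{Ha} furnishes a free commutative minimal model $(\Lambda W,d)$ of $X$ over $\mathbb{K}$. This is precisely the hypothesis that makes the rational machinery of Section~3 available in this characteristic.

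Next I would equip $\mathcal{A}=\mathcal{E}xt_{(\Lambda W,d)}(\mathbb{K},(\Lambda W,d))$ with its product and unit by re-running the construction used to prove Theorem~\ref{Th1}. The point is that that argument uses only: (i) graded-commutativity of $\Lambda W$, which allows one to view the semi-free resolution $P=\Lambda W\otimes\Lambda sW$ as a right $\Lambda W$-module and to form $Q=P\otimes_{\Lambda W}P$; (ii) the fact that $Q\xrightarrow{\simeq}(\mathbb{K},0)$ is again a $\Lambda W$-semi-free resolution, which follows from the explicit semi-free filtration and general homological algebra over a field; (iii) the augmentation $\varepsilon\colon\Lambda W\to\mathbb{K}$, used for the unit; and (iv) the flip $\tau$ with its Koszul sign, used for commutativity. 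None of these steps invokes the ground field beyond graded-commutativity and working over a field, so each carries over to $\mathbb{K}$, the sign manipulations being harmless because $\mathrm{char}(\mathbb{K})$ is odd. This yields an associative, unital, graded-commutative product $\mu_{\mathcal{A}}$ on $\mathcal{A}$.

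I would then assemble the identifications of Section~5: the two instances of (\ref{natiso}) recorded in (\ref{iso1}) and (\ref{iso2}), together with the vector-space isomorphism (\ref{iso3}) of \cite[Theorem 2.1]{FHT2}, produce the graded $\mathbb{K}$-vector space isomorphism (\ref{AHan}) between $\mathcal{E}xt_{(\Lambda W,d)}(\mathbb{K},(\Lambda W,d))$ and $\mathcal{E}xt_{(TV,d)}(\mathbb{K},(TV,d))\cong\mathcal{E}xt_{C_*(\Omega X;\mathbb{K})}(\mathbb{K},C_*(\Omega X;\mathbb{K}))$. Defining the product on the target by transport along (\ref{AHan}), associativity, unitality and graded-commutativity are automatically preserved since (\ref{AHan}) is a linear isomorphism; hence the two algebras are isomorphic, and because $(TV,d)$ is an Adams--Hilton model the structure becomes explicitly computable, as claimed.

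The main obstacle will be the second step: confirming that the commutative-cdga argument of Section~3 is genuinely characteristic-independent once $(\Lambda W,d)$ is in hand. The delicate points are that $\Lambda W$ is honestly graded-commutative in this setting---guaranteed by Halperin's construction in odd characteristic---and that the Koszul-sign computations in the unit and commutativity diagrams do not secretly require $\mathbb{Q}$; checking that $\frac12\in\mathbb{K}$ suffices for these manipulations is the only genuinely field-sensitive verification. By contrast, the third step is purely formal, as it merely carries an algebra structure across a linear isomorphism.
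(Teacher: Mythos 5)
Your proposal follows essentially the same route as the paper: invoke Halperin's minimal Sullivan model $(\Lambda W,d)$ for $X\in CW_q(\mathbb{K})$ in odd characteristic, re-run the rational construction of Theorem \ref{Th1} to put the commutative unital product on $\mathcal{E}xt_{(\Lambda W,d)}(\mathbb{K},(\Lambda W,d))$, and transport it across the chain of isomorphisms (\ref{iso1})--(\ref{iso3}) yielding (\ref{AHan}) to the Adams--Hilton side. Your extra care in checking that the Section~3 argument only uses graded-commutativity and $\frac12\in\mathbb{K}$ is a more explicit version of the paper's one-line assertion that "the argument used in the rational case" applies, not a different method.
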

 

 Recall that $\mathcal{E}xt_{(TV,d)}(\mathbb{K},(TV,d))$ is, as in the rational case, obtained  in terms of the acyclic closure of $\mathbb{K}$ of the form
$(TV\otimes (\mathbb{K}\oplus sV),\delta),$
where the differential $\delta$ satisfies  $\delta s+sd=id$, $d$ being the differential of $TV$. That is, for  any  element $z\otimes sv$  of $TV\otimes (\mathbb{K}\oplus sV)$, we have
$$\delta (z\otimes sv)=dz\otimes sv +(-1)^{|z|}zv\otimes 1-(-1)^{|z|}z\otimes sdv.$$
Notice that any element $f$ in $Hom^p_{(TV,d)}((TV\otimes (\mathbb{K}\oplus sV),\delta),(TV,d))$ is entirely determined by its image of $1\otimes (\mathbb{K}\oplus sV)$ since $TV\otimes (\mathbb{K}\oplus sV)$ is a left $(TV,d)$-module acting on the first factor. Thus we have
\begin{align*}
(D(g))(1\otimes sv) & =d\circ f(1\otimes sv)-(-1)^p f\circ\delta(1\otimes sv)\\
                    & =d f(1\otimes sv)-(-1)^{p(|v|+1)} vf(1)+(-1)^pf(1\otimes sdv).
\end{align*}
Therefore,
\begin{enumerate}
\item[(a) ]
An element $g$ in $Hom^{p-1}_{(TV,d)}((TV\otimes (\mathbb{K}\oplus sV),\delta),(TV,d))$ is in $Im(D)$ if and only if $g=D(f)$ for some $f$ in $Hom^p_{(TV,d)}((TV\otimes (\mathbb{K}\oplus sV),\delta),(TV,d))$, i.e.
$$g(1\otimes sv)=d f(1\otimes sv)-(-1)^{p(|v|+1)} vf(1)+(-1)^pf(1\otimes sdv).$$
Consequently:
\begin{equation}\label{Im}
g\in Im(D) \Leftrightarrow g(1\otimes sv) = d f(1\otimes sv)-(-1)^{p(|v|+1)} vf(1)+(-1)^pf(1\otimes sdv)\text{ for some }f.
\end{equation}

\item[(b)]
An element $f\in Hom^p_{(TV,d)}((TV\otimes (\mathbb{K}\oplus sV),\delta),(TV,d))$ is in $Ker(D)$ if and only if $D(f)=0$, that is, $d f(1\otimes sv)=(-1)^{p(|v|+1)} vf(1)-(-1)^pf(1\otimes sdv)$. Consequently:
\begin{equation}\label{Ker}
f\in Ker(D)\Leftrightarrow d f(1\otimes sv)=(-1)^{p(|v|+1)} vf(1)-(-1)^pf(1\otimes sdv).
\end{equation}
\end{enumerate}

Now, since $\operatorname{deg}(d)=-1$, $\mathcal{A}_*=\left(  Hom_{(TV,d)}((TV\otimes (\mathbb{K}\oplus sV),\delta),(TV,d) ),D\right)$ is  a $DGA_*$  in the sense of \cite{FHT2}. Using the standard convention $\mathcal{A}^{-q}=\mathcal{A}_{q},\; \text{for all } q\in \mathbb{Z}$, we obtain a  $DGA^*$ whose cohomology at $-p$ is the $\mathbb{K}$-module:
\begin{align*}
\mathcal{E}xt^{-p}_{(TV,d)}(\mathbb{Q},(TV,d)) = H_p\left(  Hom_{(TV,d)}((TV\otimes (\mathbb{K}\oplus sV),\delta),(TV,d) ),D\right)
\end{align*}
More explicitly, if $f\in Hom_{(TV,d)}(TV\otimes (\mathbb{K}\oplus sV), \Lambda V)$ is a cycle of (homological) degree $p$, it define a cohomological class $[f]\in \mathcal{E}xt^{-p}_{(TV,d)}(\mathbb{Q},(TV,d))$ of degree $-p$.



\subsection{Case of a suspension.} Assume that $\mathbb{K} = \mathbb{Q}$ and 
 let $X$ be a simply connected space, and $Y=\Sigma X$ its suspension.
 The morphism of graded modules $\sigma_{\ast} : H_{\ast}(X;\mathbb{Q})\longrightarrow H_{\ast}(\Omega \Sigma X;\mathbb{Q})$   
induced by the adjoint  $\sigma : X\rightarrow \Omega\Sigma X$ of $id_{\Sigma X}$ 
 extends to  a morphism of graded algebras
$T(\sigma_{\ast}):TH_{\ast}(X;\mathbb{Q})\longrightarrow H_{\ast}(\Omega \Sigma X;\mathbb{Q})$. In fact, this is an  isomorphism of graded algebras  
since $H_{\ast}(X;\mathbb{Q})$ is a free graded $\mathbb{Q}$-module. Therefore (\cite{BS})
$$\begin{tikzcd} (TH_{\ast}(X;\mathbb{Q}),0)\arrow[r,"\simeq"] & C_{\ast}(\Omega\Sigma X;\mathbb{Q}) \end{tikzcd}$$ is an Adams-Hilton model of $Y=\Sigma X$. It results that
 $$\mathcal{E}xt_{C_{\ast}(\Omega\Sigma X;\mathbb{Q})}(\mathbb{Q},C_{\ast}(\Omega\Sigma X;\mathbb{Q}))\cong \operatorname{Ext}_{TV}(\mathbb{Q},TV)$$  
 where  $V=H_{\ast}(X;\mathbb{Q})$.
Now of $(S,d)= (\Lambda W,d)$ is a Sullivan model of $Y=\Sigma X$, using (\ref{iso3}) we obtain a commutative diagram
$$\begin{tikzcd}[row sep=large]
\operatorname{Ext}_{TV}(\mathbb{Q},TV)\otimes_{\mathbb{Q}}\operatorname{Ext}_{TV}(\mathbb{Q},TV)\arrow[r,"\mu_{\mathcal{A}}"]\arrow[d,"\cong"'] & \operatorname{Ext}_{TV}(\mathbb{Q},TV)\arrow[d,"\cong"]\\
\mathcal{E}xt_{(S,d)}(\mathbb{Q},(S,d)) \otimes_{\mathbb{Q}} \mathcal{E}xt_{(S,d)}(\mathbb{Q},(S,d))\arrow[r,"\mu_{\mathcal{A}}"']
& \mathcal{E}xt_{(S,d)}(\mathbb{Q},(S,d)).
\end{tikzcd}$$

This permits the  use of Adams-Hilton models to explicitly describe the algebra structure on $\mathcal{A}$ 
since it restricts to ordinary $\operatorname{Ext}$ which loosen the calculations. 
Notice that $\Omega (\Sigma X)$ is weakly equivalent to the James space $J(X)$ and 
 referring to \cite[Example 7]{FHT} that there exists a minimal Sullivan model for $\Sigma X$ of the form $(\Lambda W, d)$ with quadratic differential  i.e. such that $d(W)\subset \Lambda ^2W$.
\subsection{When $X$ is a $2$-cell CW complex.} Let $\mathbb{K}$ any field containing $\frac{1}{2}$.
In this subsection, we showcase another use of the Adams-Hiton models to help picture the algebra structure on $\mathcal{A}$.
Let then  $X=S^q\cup_{\varphi} e^{q+1}$, $q\geq 2$, be the space where the cell $e^{q+1}$ is attached by a map of degree $r$. The Adams-Hilton model of $X$ has the form $(TV,d)$ where $V$ is a $\mathbb{K}$-vector space generated by $a$ and $a'$ with $deg(a)=q-1$,\quad $deg(a')=q$,\quad $da=0$ and\quad $da'=-ra$.


Let us go back to where we left off at the beginning of this section and apply, in this case, the obtained formulas (\ref{Im}) and (\ref{Ker}). This respectively  yields :

\begin{equation}\label{equaIm}
g\in Im(D) \Leftrightarrow \left \{ \begin{array}{l}
  g(1) =df(1),\\
  g(1\otimes sa)= df(1\otimes sa)-(-1)^{pq}af(1),\hspace*{1.4cm}(\text{ for some } f)\\
  g(1\otimes sa')= df(1\otimes sa')-(-1)^{p}rf(1\otimes sa)-(-1)^{p(q+1)}a'f(1),
  \end{array}
  \right.
\end{equation}
and
\begin{equation}\label{equaker}
f\in Ker(D)\Leftrightarrow \left \{ \begin{array}{l}
  df(1)=0,\\
  df(1\otimes sa)=(-1)^{pq}af(1),\\
  df(1\otimes sa')=(-1)^{p}rf(1\otimes sa)+(-1)^{p(q+1)}a'f(1).
  \end{array}
  \right.
\end{equation}

 Recall that to any pointed topological space $X$, it is associated in \cite{FHT2} an invariant called the {\it formal dimension} of $X$ (with respect to a field $\mathbb{K}$)  defined as follows:
$$fd(X, \mathbb{K}) = sup\{r\in \mathbb{Z}\; | \; [\mathcal{E}xt_{C^*(X; \mathbb{K})}^p(\mathbb{K},C^*(X; \mathbb{K}))]^r\not =0\}.$$
or $fd(X, \mathbb{K})=-\infty$ if such integer does not exist.
 In particular \cite[Proposition 5.1]{FHT2}, if $H^*(X; \mathbb{K})$ is finite dimensional,
$$fd(X, \mathbb{K}) = sup\{r\in \mathbb{Z}\; | \; {H^r(X; \mathbb{K})}\not =0\}.$$
Notice that, using cellular homology, we see that $H_*(X, \mathbb{Z})=H_0(X, \mathbb{Z})\oplus H_{q}(X, \mathbb{Z})\cong \mathbb{Z}\oplus \mathbb{Z}/r\mathbb{Z}$. We should then discuss two cases:
\begin{enumerate}
\item[(i)] If $char(\mathbb{K})=0$ or co-prime with $r$, we have $H^*(X, \mathbb{K})=H^0(X, \mathbb{K})\cong \mathbb{K}$. In this case, $H^*(X, \mathbb{K})$ has formal dimension $fd(X)=0$, thus, it is  a Poincaré duality space. Moreover, since it has finite dimensional cohomology, it is also a Gorenstein space \cite[Theorem 3.1]{FHT2}.
\item[(ii)] If $char(\mathbb{K})$ divides   $r$ then, $H^*(X, \mathbb{K})=H^0(X, \mathbb{K})\oplus H^{q}(X,\mathbb{K})\oplus H^{q+1}(X,\mathbb{K}) \cong \mathbb{K}\oplus \mathbb{K}\oplus\mathbb{K} $. Thus, since $q\geq 2$, $X$ is neither a  Poincaré duality space nor a  Gorenstein space \cite[Theorem 1]{FH1}. In this case, $fd(X)=q+1$, so that $\mathcal{E}xt^{k}_{(TV,d)}(\mathbb{K},(TV,d))=0, \forall k > q+1$.
\end{enumerate}\vspace*{0.1cm}
{\bf Example:}  In this example, we specify the case where $q=2$, i.e. $X=S^2\cup_{\varphi} e^3$. Thus $V=\mathbb{K}a\oplus \mathbb{K}a'$  with $|a|=1$ and $|a'|=2$.   We give below an  explicit computation of it to illustrate the use of Adams-Hilton models.

\textbf{i.}  {\it Assume that $char(\mathbb{K})=0$ or co-prime with $r$} (we specialize in the case where $\mathbb{K}=\mathbb{Q}$).

Let $f$ be a cycle of degree $0$, we have $df(1)=0$, then $f(1)$ is necessarily a scalar $f(1)=\gamma$. The second equation in (\ref{equaker}) implies that $df(sa)=\gamma a$, therefore $f(sa)=- \frac{\gamma}{r}a'+\gamma ' a^2$. The last equation in (\ref{equaker}) gives, after a simple simplification, $df(sa')=r\gamma 'a^2$, then $f(sa')= -\gamma_1'a'\cdot a + \gamma_2' a\cdot a' + \gamma'' a^3$,
$$Ker(D)=\dfrac{\mathbb{Q}\oplus \mathbb{Q}a' \oplus \mathbb{Q}a^2\oplus \mathbb{Q}a'\cdot a\oplus\mathbb{Q}a\cdot a'\oplus\mathbb{Q}a^3}{<x_1=-rx_2; x_3=-x_4+x_5>}.$$
Now let $g$ be an arbitrary element of degree $1$
$$\left\lbrace
\begin{array}{l}
g(1)=\alpha_1 a \\
g(sa)=\alpha_2 a'\cdot a + \alpha_3 a\cdot a' + \alpha_4 a^3 \\
g(sa')=\alpha_5 a'^2 + \alpha_6 a'\cdot a^2 + \alpha_7 a\cdot a'\cdot a + \alpha_8 a^2\cdot a' + \alpha_9 a^4,
\end{array}
\right.$$
by [\ref{equaIm}] we have
$$\left\lbrace
\begin{array}{l}
D(g)(1)=0 \\
D(g)(sa)=(-\alpha_1 -r \alpha_2 + r\alpha_3) a^2 \\
D(g)(sa')=(\alpha_1+r\alpha_2-r\alpha_5)a'\cdot a+ (r\alpha_3-r\alpha_5)a\cdot a'+(r\alpha_4-r\alpha_6+r\alpha_7-r\alpha_8)a^3,
\end{array}
\right.$$
therefore
$$Im(D)=\dfrac{\mathbb{Q}a^2\oplus \mathbb{Q}a'\cdot a\oplus\mathbb{Q}a\cdot a'\oplus\mathbb{Q}a^3}{<x_1=-x_2+x_3>}.$$
We consequently obtain $\mathcal{E}xt^0_{(TV,d)}(\mathbb{Q},(TV,d))=\mathbb{Q}$.

Applying the same process for $i\neq 0$, we recover the previously stated fact $\mathcal{E}xt^i_{(TV,d)}(\mathbb{Q},(TV,d))=0$.


\textbf{ii.} {\it  Assume  $char(\mathbb{K})$ divides $r$} so that $r=0$ (mod $char(\mathbb{K})$) and  $da=da'=0$.
Recall that in general, we have:
$$(D(f))(1\otimes sa)= df(1\otimes sa)-(-1)^{pq}af(1)=0,$$
and
$$(D(f))(1\otimes sa')= df(1\otimes sa')-(-1)^{p}rf(1\otimes sa)-(-1)^{p(q+1)}a'f(1)=0.$$
These become respectively in this case:
\begin{equation}\label{case (ii)}
(D(f))(1\otimes sa)= -(-1)^{pq}af(1)=0,
\text{ and }
(D(f))(1\otimes sa')= -(-1)^{p(q+1)}a'f(1)=0.
\end{equation}

Notice that in this case, for an element $f$ to be in $ker(D)$ it is necessarily that $f(1)=0$

Let $f$ be a cycle of degree $0$, then we have $f(1)=0$, consequently $df(sa)=df(sa')=0$, which implies $f(sa)=\gamma_1 a'+\gamma_2 a^2$ and $f(sa')= \gamma_3a'\cdot a + \gamma_4 a\cdot a' + \gamma_5 a^3$. Therefore
$$Ker(D)=\mathbb{K}a^2\oplus \mathbb{K}a' \oplus \mathbb{K}a'\cdot a\oplus\mathbb{K}a\cdot a'\oplus\mathbb{K}a^3.$$
Now let $g$ be an arbitrary element of degree $1$
$$\left\lbrace
\begin{array}{l}
g(1)=\alpha_1 a \\
g(sa)=\alpha_2 a'\cdot a + \alpha_3 a\cdot a' + \alpha_4 a^3 \\
g(sa')=\alpha_5 a'^2 + \alpha_6 a'\cdot a^2 + \alpha_7 a\cdot a'\cdot a + \alpha_8 a^2\cdot a' + \alpha_9 a^4,
\end{array}
\right.$$
hence 	
$$\left\lbrace
\begin{array}{l}
D(g)(1)=0 \\
D(g)(sa)=-ag(1)=-\alpha_1a^2 \\
D(g)(sa')=a'g(1)=\alpha_1a'\cdot a,
\end{array}
\right.$$
therefore
$$Im(D)=\dfrac{\mathbb{K}a^2\oplus \mathbb{K}a'\cdot a}{<x_1=-x_2>}\cong \mathbb{K}(a'\cdot a -a^2).$$
we obtain $\mathcal{E}xt^0_{(TV,d)}(\mathbb{K},(TV,d))\cong \mathbb{K}a^2\oplus \mathbb{K}a' \oplus\mathbb{K}a\cdot a'\oplus\mathbb{K}a^3\cong \mathbb{K}^{4}$.

An application of the same argument yields
\begin{align*}\mathcal{E}xt^{-1}_{(TV,d)}(\mathbb{K},(TV,d)) \cong \mathbb{K}a^3 \oplus  \mathbb{K}a\cdot a' \oplus \mathbb{K}a'\cdot a \oplus  \mathbb{K}a\cdot a'\cdot a\oplus \mathbb{K}a^2\cdot a'\oplus \mathbb{K}a^4 \cong \mathbb{K}^{6},
\end{align*}

$$
\begin{array}{lcl}
\mathcal{E}xt^{-2}_{(TV,d)}(\mathbb{K},(TV,d)) & \cong & \mathbb{K}a^4 \oplus \mathbb{K} a^2\cdot a' \oplus \mathbb{K}a\cdot a'\cdot a \oplus  \mathbb{K} a'\cdot a^2 \oplus \mathbb{K}a'^2  \oplus \\
& &  \mathbb{K}a\cdot a'^2 \oplus  \mathbb{K}a^3\cdot a'\oplus  \mathbb{K} a^2\cdot a'\cdot a \oplus \mathbb{K}a\cdot a'\cdot a^2\oplus \mathbb{K}a^5\\
& \cong & \mathbb{K}^{10}.
\end{array}
$$
From the previous cases, it is obvious that  $\mathcal{E}xt^{-i}_{(TV,d)}(\mathbb{K},(TV,d))\not =0,\; \forall i\geq 0$. Since $fd(X, \mathbb{K})=3$, we have
$\mathcal{E}xt^{i}_{(TV,d)}(\mathbb{K},(TV,d)) = 0, \; \forall i\geq 4$. It remains then to calculate the cases cases $i= -1, -2, -3$ which are given successively by applying the same process as follows\\
$\mathcal{E}xt^{1}_{(TV,d)}(\mathbb{K},(TV,d))\cong \mathbb{K}a'\oplus \mathbb{K}a^2$,
$\mathcal{E}xt^{2}_{(TV,d)}(\mathbb{K},(TV,d))\cong \mathbb{K}\oplus \mathbb{K}a$ and $\mathcal{E}xt^{3}_{(TV,d)}(\mathbb{K},(TV,d))\cong \mathbb{K}$.\\

Now notice that for $q>2$, Adams-Hilton model for $X=S^q\cup_{\varphi} e^{q+1}$ has generators $a$ and $a'$ of degrees respectively $q-1$ and $q$, thus the degree of $a'$ does not double that of $a$, so the previous example is somewhat a special case. However the computational process still holds, and we have, for the case \textbf{i}, $\mathcal{E}xt^{0}_{(TV,d)}(\mathbb{K},(TV,d))=\mathbb{K}$ and $\mathcal{E}xt^{i}_{(TV,d)}(\mathbb{K},(TV,d))=0$ for $i\neq 0$. Whereas for the case \textbf{ii}, computation process holds but the results differ, since we might have $\mathcal{E}xt^{-i}_{(TV,d)}(\mathbb{K},(TV,d))=0$ for finitely many $i\geq -(q+1)$ (e.g. for $q=7$ we have $\mathcal{E}xt^{7}_{(TV,d)}(\mathbb{K},(TV,d)) =0$), on the other hand, we always have $\mathcal{E}xt^{-i}_{(TV,d)}(\mathbb{K},(TV,d)) =0,\; \forall i<-(q+1)$ since $fd(X)=q+1$.

\end{document}